\documentclass[12pt, reqno]{amsart}
\usepackage{amsmath}
\usepackage{amsfonts}
\usepackage{amssymb}
\usepackage[all,cmtip]{xy}           
\usepackage{bm}
\usepackage{bbm}
\usepackage{bbding}
\usepackage{txfonts}
\usepackage{amscd}
\usepackage{xspace}
\usepackage[shortlabels]{enumitem}
\usepackage{ifpdf}

\ifpdf
  \usepackage[colorlinks,final,backref=page,hyperindex]{hyperref}
\else
  \usepackage[colorlinks,final,backref=page,hyperindex,hypertex]{hyperref}
\fi
\usepackage{tikz}
\usepackage[active]{srcltx}

\usepackage{tikz-cd}
\usepackage{tikz}

\usepackage{pdfsync}    

\topmargin -.8cm \textheight 22.8cm \oddsidemargin 0cm \evensidemargin -0cm \textwidth 16.3cm

\makeatletter

\newtheorem{thm}{Theorem}[section]
\newtheorem{lem}[thm]{Lemma}

\newtheorem{pro}[thm]{Proposition}

\newtheorem{question}{Question}
\theoremstyle{definition}
\newtheorem{ex}[thm]{Example}
\newtheorem{rmk}[thm]{Remark}
\newtheorem{defi}[thm]{Definition}

\newcommand{\nc}{\newcommand}
\newcommand{\delete}[1]{}

\nc{\mlabel}[1]{\label{#1}}  
\nc{\mcite}[1]{\cite{#1}}  
\nc{\mref}[1]{\ref{#1}}  
\nc{\meqref}[1]{\eqref{#1}}  
\nc{\mbibitem}[1]{\bibitem{#1}} 

\delete{
\nc{\mlabel}[1]{\label{#1}{\hfill \hspace{1cm}{\bf{{\ }\hfill(#1)}}}}
\nc{\mcite}[1]{\cite{#1}{{\bf{{\ }(#1)}}}}  
\nc{\mref}[1]{\ref{#1}{{\bf{{\ }(#1)}}}}  
\nc{\meqref}[1]{\eqref{#1}{{\bf{{\ }(#1)}}}}  
\nc{\mbibitem}[1]{\bibitem[\bf #1]{#1}} 
}

\nc{\tblue}[1]{\textcolor{blue}{#1}}
\nc{\tred}[1]{\textcolor{red}{#1}}
\nc{\tnb}[1]{\textcolor[rgb]{0.00,0.00,0.50}{#1}}
\nc{\tpurple}[1]{\textcolor{purple}{#1}}

\nc{\name}[1]{{\bf #1}}

\nc{\lc}{\lfloor}
\nc{\rc}{\rfloor}
\nc{\dirlim}{\varinjlim}

\nc{\bfk}{\mathbf{k}}
\nc{\invlim}{\displaystyle{\lim_{\longleftarrow}}\,}
\nc{\ot}{\otimes}
\nc{\im}{\mathrm{im}}
\nc{\free}[1]{\overline{#1}}

\nc{\tforall}{,\quad \forall}

\nc{\Ad}{\mathrm{Ad}}
\nc{\AD}{\overline{\mathrm{Ad}}}

\nc{\Aut}{\mathrm{Aut}}

\nc{\desc}{descendent\xspace}

\nc{\B}{\mathfrak{B}}\nc\A{\mathfrak{A}}




\nc{\BA}{{\mathbb A}} \nc{\CC}{{\mathbb C}} \nc{\DD}{{\mathbb D}} \nc{\EE}{{\mathbb E}} \nc{\FF}{{\mathbb F}} \nc{\GG}{{\mathbb G}} \nc{\HH}{{\mathbb H}} \nc{\LL}{{\mathbb L}} \nc{\NN}{{\mathbb N}} \nc{\KK}{{\mathbb K}} \nc{\PP}{{\mathbb P}} \nc{\QQ}{{\mathbb Q}} \nc{\RR}{{\mathbb R}} \nc{\TT}{{\mathbb T}} \nc{\VV}{{\mathbb V}} \nc{\ZZ}{{\mathbb Z}}


\nc{\cala}{{\mathcal A}} \nc{\calc}{{\mathcal C}} \nc{\cald}{{\mathcal D}} \nc{\cale}{{\mathcal E}} \nc{\calf}{{\mathcal F}} \nc{\calg}{{\mathcal G}} \nc{\calh}{{\mathcal H}} \nc{\cali}{{\mathcal I}} \nc{\call}{{\mathcal L}} \nc{\calm}{{\mathcal M}} \nc{\caln}{{\mathcal N}} \nc{\calo}{{\mathcal O}} \nc{\calp}{{\mathcal P}} \nc{\calr}{{\mathcal R}} \nc{\cals}{{\mathcal S}} \nc{\calt}{{\mathcal T}} \nc{\calw}{{\mathcal W}} \nc{\calk}{{\mathcal K}} \nc{\calx}{{\mathcal X}}
\nc{\calz}{{\mathcal Z}}
\nc{\WC}{\mathcal{WC}}



\nc{\bvp}[2]{\boxed{\begin{array}{l}#1\\#2\end{array}}}
\nc{\evl}{E} \nc{\cum}{{\textstyle \varint}} \nc{\mapmonoid}{\frakM}
\nc{\X}{X^{\pm 1}} \nc{\redx}{\text{Red}(X)} \nc{\bre}{{\rm bre}}\nc{\dep}{\mathrm{dep}}
\nc{\rbgw}{\frakA(X)} \nc{\rbgo}{\lc \, \rc} \nc{\lbar}{\overline}

\newcommand{\frakA}{\mathfrak A}

\newcommand{\frakM}{\mathfrak M}

\newcommand{\frakR}{\mathfrak R}

\nc{\aveo}{averaging operator\xspace}
\nc{\aveos}{averaging operators\xspace}
\nc{\aveg}{averaging group\xspace}
\nc{\avelg}{averaging Lie group\xspace}
\nc{\avegs}{averaging groups\xspace}
\nc{\paveo}{pointed averaging operator\xspace}
\nc{\paveos}{pointed averaging operators\xspace}
\nc{\paveg}{pointed averaging group\xspace}
\nc{\pavegs}{pointed averaging groups\xspace}
\nc\Aa{A} \nc\id{{\rm id}} \nc\tr{{\rm tr}}
\nc{\degp}{{\rm deg_{P}}}
\nc\rb{Rota-Baxter \xspace}
\nc\ave{averaging\xspace}
\nc\al{Lie algebras\xspace}
\nc\gr{Lie groups\xspace}
\nc{\rack}{\vartriangleright}

\begin{document}

\title[Averaging operators on groups and Hopf algebras]{Averaging operators on groups and Hopf algebras}

\author{Huhu Zhang}
\address{School of Mathematics and Statistics
	Yulin University,
	Yulin, Shaanxi 719000, China}
\email{huhuzhang@yulinu.edu.cn}

\author{Xing Gao$^{*}$}\thanks{*~Corresponding author}
\address{School of Mathematics and Statistics, Lanzhou University
Lanzhou, 730000, China;
Gansu Provincial Research Center for Basic Disciplines of Mathematics
and Statistics, Lanzhou, 730070, China
}
\email{gaoxing@lzu.edu.cn}
\date{\today}

\begin{abstract}
Rota-Baxter operators on groups were studied quite recently.
Motivated mainly by the fact that weight zero Rota-Baxter operators and averaging operators are Koszul dual to each other, we propose the concepts of averaging group and averaging Hopf algebra, and study relationships among them and the existing averaging Lie algebras.
We also show that an averaging group induces a disemigroup and a rack, respectively.
As the free object is one of the most significant objects in a category, we also construct explicitly the free averaging group on a set.
\end{abstract}

\makeatletter
\@namedef{subjclassname@2020}{\textup{2020} Mathematics Subject Classification}
\makeatother
\subjclass[2020]{
22E60, 
08B20, 
17B40 
16W99 %
}

\keywords{Operated groups, averaging groups, free object}

\maketitle

\vspace{-.9cm}

\tableofcontents

\vspace{-.9cm}

\allowdisplaybreaks

\section{Introduction}

This paper studies averaging operators on groups that are compatible with the Lie algebra
and Hopf algebra structures.

\subsection{Linear operators on algebras and operators on groups}
Linear operators, defined by the specific algebraic identities they satisfy, have been fundamental to the development of mathematical research across various fields. These operators, often characterized by their structural properties, continue to attract considerable attention due to their diverse applications and theoretical significance. Prominent examples include the derivative and integral operators in analysis, as well as the endomorphisms and automorphisms in the study of Galois theory. In addition to these well-known cases, there exists a rich spectrum of other operator types, such as difference operators, weighted differential operators, Rota-Baxter operators~\cite{Da,Gu,Gub,GK,Ro1}, Reynolds operators, averaging operators, and Nijenhuis operators. These, along with their multi-operator analogs, have emerged from a variety of mathematical domains, including geometry, probability theory, fluid mechanics, analysis~\cite{Bax}, combinatorics, differential equations~\cite{CGM,Ko,PS}, and mathematical physics~\cite{BGN,Ku,STS}. Their applications range widely, from the renormalization in quantum field theory to providing mechanical proofs of fundamental geometric theorems~\cite{CK,GGL,Kf,N,ZtGG,ZGG,ZGG2,ZGM}, showcasing their profound impact across both theoretical and applied mathematics.

In an effort to unify these various operator identities into a coherent framework, Rota~\cite{Ro2} proposed the {\bf Rota's Classification Problem}, which can be phrased as:
\begin{quote}
finding all possible \underline{algebraic identities} that can be satisfied by a linear operator on an \underline{algebra}.
\end{quote}
This problem, which has become a central theme in the study of linear operators, seeks to understand the full extent of algebraic structures that such operators can generate. These have facilitated the discovery of previously unknown operator identities, particularly on associative algebras~\mcite{GG,GGZh,Guop,GSZ} and Lie algebras~\mcite{ZhGG}, revealing new layers of structure in both classical and modern algebra.

While the investigation of operator identities on algebras remains a central focus, recent years have seen a notable resurgence of interest in the study of operators on groups. Despite the fact that groups lack the linear structure of vector spaces or algebras, the study of operators on groups has a long and storied history. In the early 20th century, Emmy Noether and her school made significant contributions to the theory of groups with automorphisms, incorporating this concept into her original formulation of the three Noetherian isomorphism theorems, which are foundational to modern algebra.
In the Bourbaki treatment of group theory~\cite{Bou}, the idea of groups equipped with endomorphisms was introduced at the very outset, a concept that would later be expanded to define modules, in much the same way that rings with operators are used to define algebras. This analogy has proven fruitful in bridging ideas between group theory and other branches of algebra.

The continuing exploration of operators, whether on algebras or groups, is a testament to the central role they play in the evolution of mathematical thought. Their study has led to significant breakthroughs in both abstract theory and practical applications, and the ongoing research in this area promises to reveal even deeper connections between algebra, geometry, and physics.

\subsection{Averaging operators}
Averaging operators first introduced in the work of Reynolds in 1895 in connection with the theory of turbulence~\mcite{Rey},
and have been studied in various areas.
The averaging operator provides a useful tool on the construction of supergravity theories~\mcite{NS} and higher gauge theories~\mcite{BDH}, which is called the embedding tensor in  physics. Kotov
and Strobl used embedding tensors to construct tensor hierarchies, which show us the possible
mathematical properties of embedding tensors from the physics point of view~\mcite{KS}.

Brainerd~\mcite{Br} considered the conditions under which an averaging operator can be represented as an integration on the abstract analogue of the ring of real valued measurable function.
Aguiar~\mcite{Ag} showed that a diassociative algebra can be derived from an averaging associative algebra by defining two new operations $x \dashv y := xP(y)$ and $x \vdash y := P(x)y$. An analogue process gives a Leibniz algebra from an averaging Lie algebra by defining a new operation ${x, y} := [P(x), y]$.
In general, Kolesnikov and his coauthors introduced  notions di-Var-algebra and tri-Var-algebra~\cite{GKo,KV}.
Their notions also apply to not necessarily binary operads.
Alternatively, an averaging operator was defined on any binary operad and this kind of process was systematically studied in~\mcite{PBGN,PBGN1} by relating the averaging actions to a special construction of binary operads called duplicators.
In~\mcite{GP}, the authors studied averaging operators from an algebraic and combinatorial point of view.
Das~\mcite{Das} introduced the notion of a pointed averaging operator on a group, which can be induced from our presently studied averaging operators on groups.

\subsection{Rota-Baxter operators on groups}
Recently,  in the seminal work~\mcite{GLS}, Guo, Lang and Sheng introduced and studied Rota-Baxter operators of weight $\pm1$ on groups, motivated by the classical Yang-Baxter equation and Poisson geometry~\mcite{Mk,STS}. The differentiation of smooth Rota-Baxter operators on Lie groups gives Rota-Baxter operators on the corresponding Lie algebras. As an application, the fundamental factorization of a Lie group originally obtained indirectly from locally integrating a factorization of a Lie algebra now comes directly from a global factorization of the Lie group~\mcite{RS1,RS2,STS}, equipped with the Rota-Baxter operator. Incidentally, the formal inverse of a Rota-Baxter operator on a Lie group is none other than the crossed homomorphism on the Lie group, and differentiates to a differential operator on the Lie algebra of left-translation-invariant vector fields.

After~\cite{GLS}, there has been a boom of studies on the theory and applications of Rota-Baxter and differential operators on groups. The free Rota-Baxter group and free differential group were explicitly constructed in~\mcite{GGLZ}.
General properties of Rota-Baxter groups, especially extensions of Rota-Baxter groups and Rota-Baxter operators on sporadic simple groups, were studied in~\mcite{BG}. It was shown that Rota-Baxter groups give rise to braces and the more general skew left braces in quantum Yang-Baxter equation~\mcite{BG2}.
In~\mcite{CS}, a characterization of the gamma functions on a group that come from Rota-Baxter operators as in~\mcite{BG2} was given in terms of the vanishing of a certain element in a suitable second cohomology group.
Rota-Baxter operators on Clifford semigroups were introduced as a useful tool for obtaining dual weak braces~\mcite{CMS}.
In~\mcite{BNY}, Rota-Baxter groups and skew braces were further studied.
Quite recently, two kinds of weight zero Rota-Baxter and differential operators on groups were introduced and studied in~\mcite{GGH, GGHZ}, respectively.

A cohomological theory of Rota-Baxter Lie groups was developed which differentiates to the existing cohomological theory of Rota-Baxter Lie algebras~\mcite{JSZ}.
A one-to-one correspondence was established between factorizable Poisson Lie groups and quadratic Rota-Baxter Lie groups~\mcite{LS}.
A Rota-Baxter operator on a cocommutative Hopf algebra was introduced and studied~\mcite{Gon}, generalizing the notions of a Rota-Baxter operator on a group and a Rota-Baxter operator of weight 1 on a Lie algebra.
Post-groups and pre-groups were introduced~\mcite{BGST} that can be derived from Rota-Baxter groups and capture the extra structures on (Lie-)Butcher groups in numerical integration, braces, Yang-Baxter equation and post-Hopf algebras~\mcite{BG2,CJO,ESS,LYZ,ML,MW}.

\subsection{Motivation and outline of the present paper}

Our motivation to propose the concept of averaging operator on a (Lie) group is twofold.
One is from the Koszul duality and the other is from taking tangent space.

The action of the Rota-Baxter operator on a binary quadratic operad splits the operad~\cite{BBGN},
and the action of the averaging operator on a binary quadratic operad duplicates the operad~\cite{PBGN}.
Given that the splitting and duplication processes are in Koszul duality~\cite{PBGN1}, the Rota-Baxter operator and the averaging operator can be considered as
Koszul duals of each other in this context.
For example, the Koszul dual of dendriform algebras (induced by weight zero Rota-Baxter associative algebras) is diassociative algebras (induced by averaging associative algebras);
the Koszul dual of preLie algebras (induced by weight zero Rota-Baxter Lie algebras)  is Leibniz algebras (induced by averaging commutative algebras).
Recently, Guo et al.~\mcite{GGH,GGHZ} first introduced  two kinds of weight zero Rota-Baxter operators on groups.
One is the limit weight zero Rota-Baxter operators on groups. The other is
the limit-weighted Rota-Baxter operators on limit-abelian groups.
Under the framework of Lie groups, taking tangent space of both of them and then restricting to some special cases give the weight zero Rota-Baxter Lie algebras.
Meanwhile, the new notation of pre-Lie group was also initiated
such that the following relations hold.
$$
\small{
\xymatrix{
    \txt{weight zero RBOs \\ on Lie algebras }&  & \txt{limit-weighted RBOs on\\ limit-abelian Lie groups}\\
    \text{pre-\al }& &\text{pre-\gr }\\
    \ar@{->}"1,3";"1,1"_{\text{tangent space}}^{\text{a special case}}
     \ar@{->}"2,3";"2,1"^{\text{tangent space}}_{\text{\cite[Thm.~3.21]{GGHZ}}}
      \ar@{->}"1,1";"2,1"^{\text{splitting}}_{\text{\cite[Prop.~3.23]{BBGN}}}
       \ar@{->}"1,3";"2,3"^{\text{inducing }}_{\text{ \cite[Prop.~3.22]{GGHZ} }}
}
}
$$\vskip-0.8cm
\noindent Here and in Figures below, we abbreviate Rota-Baxter operator and Rota-Baxter as RBO and RB, respectively.
Also, Kinyon showed that every Leibniz algebra is the tangent Leibniz algebra of a Lie rack~\cite[Lemma 5.4]{Ki}.
Therefore, it is natural to consider the following question.

\begin{question}
Propose and study averaging operators on $($Lie$)$ groups such that
\begin{enumerate}
\item from the viewpoint of Koszul duality, the expected relations in Figure~$\mref{fig:kdual1}$ hold.

\item from the viewpoint of taking tangent space, the expected relations in Figure~$\mref{fig:tspace}$ are valid.
\end{enumerate}
\end{question}

{\bf The outline.} Our main goal of the present paper is to address the above question.
The outline is as follows.
Section~\mref{sec:opgp} is devoted to recalling some concepts and results on operated groups.
In particular, the explicit construction of the free operated group is recalled, which will be employed to construct free averaging groups.
In Section~\mref{sec:avegp},  we first propose the concept of averaging group. Then we show that an averaging group induces a disemigroup (Proposition~\ref{pro:digr}) and a rack (Proposition~\ref{pro:rac}), respectively.
The differentiation of a smooth averaging operator on a Lie group gives an
averaging operator on the corresponding Lie algebra (Theorem~\mref{thm:avega}).
Based on proposing the concept of averaging Hopf algebra, we prove that
an operator on a group is an
averaging operator if and only if its linear extension on the corresponding group Hopf algebra is an averaging operator
(Theorem~\mref{thm:hopf}).
Finally in Section~\mref{sec:rbgp}, motivated by the idea of rewriting systems and Gr\"{o}bner-Shirshov bases, we collect a special subset of elements in the free operated
group and define a multiplication and an operator on it to form an averaging group (Theorem~\mref{thm:avegp}). Based on this, we further prove that this averaging group is the required free averaging group on a set (Theorem~\mref{thm:freea}).
\begin{figure}[htbp]\vskip-0.6cm
\caption{Relations from the viewpoint of Koszul duality}
\mlabel{fig:kdual1}
\begin{displaymath}
{\small\xymatrix{
\txt{weight zero \\ RB  groups}& &\text{groups} && \text{\ave groups}\\
\txt{weight zero RB \\ associative algebras}&& \txt{associative \\ algebras} & &\txt{\ave \\ associative algebras}\\
\txt{dendriform algebras}&&  && \text{diassociative algebras}\\
\ar@{->}"1,3";"1,1"_{\quad\quad\text{weight zero RBO}}
\ar@{->}"1,3";"1,5"^{\text{averaging operator }}
\ar@{->}"2,3";"2,1"_{\quad\quad\text{weight zero RBO }}
\ar@{->}"2,3";"2,5"^{\text{averaging operator }}
\ar@{->}"1,1";"2,1"|-{\text{linear extension}}
\ar@{->}"1,3";"2,3"|-{\text{linear extension}}
\ar@{->}"1,5";"2,5"|-{\text{linear extension}}
\ar@{->}"2,1";"3,1"|-{\text{splitting}~~\text{\cite[Coro.~5.2]{BBGN}}}
\ar@{->}"2,5";"3,5"|-{\text{replicating}~~\text{\cite[Thm.~4.3]{PBGN}}}
\ar@{<->}"3,1";"3,5"^-{\text{Koszul dual}~~\text{\cite{Lod}}}
}}
\end{displaymath}
\vskip-2cm
\end{figure}
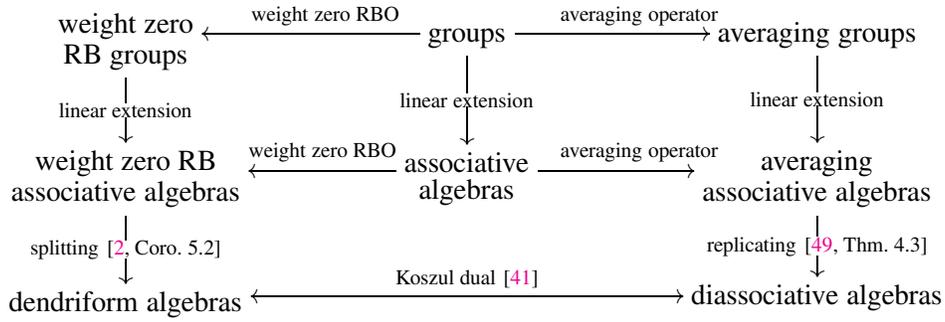
\begin{figure}[htbp]
\caption{Relations from the viewpoint of taking tangent space}
\mlabel{fig:tspace}
\begin{displaymath}
\small{
\xymatrix{
    \text{\ave \al}&  & \text{\ave \gr}\\
    \text{Leibniz algebras}& &\text{Lie racks }\\
    \ar@{->}"1,3";"1,1"_{\text{tangent space}}^{\text{Thm.~\ref{thm:avega}}}
     \ar@{->}"2,3";"2,1"^{\text{tangent space}}_{\text{\cite[Lemma 5.4]{Ki}}}
      \ar@{->}"1,1";"2,1"^{\text{\cite[Coro.~4.4]{PBGN}}}_{{\rm inducing}}
       \ar@{->}"1,3";"2,3"^{{\rm Prop.~\ref{pro:rac}}}_{{\rm inducing}}
}
}
\end{displaymath}
\vskip-1.5cm
\end{figure}
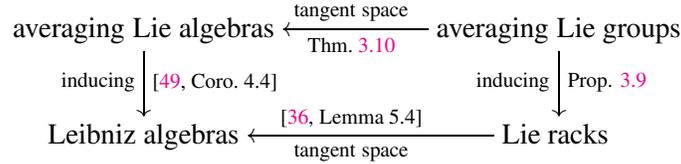

\section{Operated groups}
\mlabel{sec:opgp}
In this section, we recall some notations and results of operated groups, which will be used later.
Let us begin with the following concept.

\begin{defi}\mcite{GGLZ}
An {\bf operated group} is a group $G$ together with a map $P:G\to G$.
A {\bf morphism} from an operated group $(G,P)$ to another one $(H,Q)$ is a group homomorphism $\varphi:G\to H$ such that $Q\circ \varphi=\varphi \circ P$.
\end{defi}

We recall some examples of operated groups.

\begin{ex} Let $G$ be a group.
\begin{enumerate}

\item For each $g\in G$, the pair $(G, \Ad_g)$ is an operated group, where $\Ad_g$ is
the adjoint action
$$\Ad_g:G\longrightarrow G, \quad h \mapsto \Ad_g h:= ghg^{-1}.$$

\item The pair $(G, \varphi)$ is an operated group, where $\varphi:G\rightarrow G$ is a group endomorphism.

 \item The differential group $(G, \mathfrak{D})$ of weight 1 (resp. weight -1)~\cite{GLS} is an operated group, which arises from crossed homomorphisms~\mcite{Se} and satisfies
 $$\mathfrak{D}(gh)=\mathfrak{D}(g)\Ad_g \mathfrak{D}(h) \quad  \Big(\text{resp. } \mathfrak{D}(gh)=(\Ad_g \mathfrak{D}(h))\mathfrak{D}(g)\Big) \tforall g, h\in G. $$

 \item  The Rota-Baxter group $(G, \B)$ of weight 1 (resp. weight -1)~\mcite{GLS} is an operated group with the Rota-Baxter relation given by
 $$\B(g)\B(h)=\B(g\Ad_{\B(g)} h) \quad \Big(\text{resp. } \B(g)\B(h)=\B( (\Ad_{\B(g)} h) g) \Big)\tforall~ g,h\in G,$$
 coming from studies of integrable systems and Hopf algebras.
\end{enumerate}
\end{ex}

The notion of free operated groups can be defined by a universal property.
We recall an explicit construction of the free operated group on a set by bracketed words~\mcite{GGLZ}.

\begin{defi}
Let $X$ be a set. The \name{free operated group} on $X$ is an operated group $(F(X),P_X)$ together with a map $j_X:X\to F(X)$ with the property that, for each operated group $(G,P)$ and each set map $\varphi:X\to G$, there is a unique homomorphism $\free{\varphi}:(F(X),P_X)\to (G,P)$ of operated groups such that $\free{\varphi}\circ j_X= \varphi$. 	
\end{defi}

We first give some general notations. For any set $Y$, let
$$Y^{-1} :=\{y^{-1}\mid y\in Y\},\quad  \lc Y \rc :=\{\lc y\rc\mid y\in Y\} $$
denote two distinct copies of $Y$ that are disjoint from $Y$.
Let $G(Y)$ denote the free group generated by $Y$. The $G(Y)$ consists of the identity $1$ and reduced words in the letter set $Y\sqcup Y^{-1}$, where a reduced word is an element of the form
$ w=w_1\cdots w_k,$
with $w_i\in Y\sqcup Y^{-1}$ so that no adjacent letters $w_i$ and $w_{i+1}$ are of the form $y y^{-1}$ for a $y\in Y\sqcup Y^{-1}$.

Now fix a set $X.$ Define a directed system of free groups $\calg_n:=\calg_n(X), n\geq 0,$ by a recursion. First denote
$\calg_0: = G(X).$
Next for given $k\geq0$, assume that $ \calg_n$  has been defined for $n < k$, such that
\begin{enumerate}
  \item $\calg_{n+1}:=G(X\sqcup \lc \calg_{n}\rc)$.
  \item there are natural injections $\calg_{n} \hookrightarrow \calg_{n+1}$ of groups.
\end{enumerate}
We then recursively define
$$\calg_{k+1}:=G(X\sqcup \lc \calg_{k}\rc).$$
Further the injection $\calg_{k-1} \hookrightarrow \calg_{k}$  induces the set injection
$$X\sqcup \lc \calg_{k-1}\rc \hookrightarrow X\sqcup \lc \calg_{k}\rc.$$
Then the functoriality of taking the free group extends to the injection
 $\calg_{k} \hookrightarrow \calg_{k+1}$
of free groups.

Finally, we define the direct limit of the directed system
$$ \calg:=\calg(X):=\dirlim \calg_n = \bigcup_{n\geq 0} \calg_n.$$
It is still a group which naturally contains $\calg_n, n\geq 0,$ as subgroups via the structural injective group homomorphisms
\begin{equation*}
	i_n:\calg_n\to \calg.
	\mlabel{eq:nind}
\end{equation*}

We also recall some basic properties and notations of elements of $\calg$.
\begin{enumerate}
\item For $w\in \calg$, we have $w\in \calg_n$ for some $n\geq 0$. Thus $\lc w\rc$ is in $\calg_{n+1}$ and so in $\calg$. Notice that $\lc w\rc$ is independent of the choice of $n$ such that $w$ is in $\calg_n$.
Hence $$P_X:=\lc ~\rc: \calg(X)\to \calg(X), \quad w\mapsto \lc w\rc$$ is an operator on the group $\calg(X)$.

\item For $n\geq1$, denote by $\lc w \rc^{(n)}$ the $n$-th iteration on  $w\in \calg$ of the operator $\lc~\rc$.

\item Denote
$$X^{\pm 1}:= X \sqcup X^{-1},\,\lc \calg\rc^{-1}:= \{w^{-1} \mid w\in \lc \calg\rc\}\,\text{ and }\,\lc \calg\rc^{\pm 1}:= \lc \calg\rc \sqcup \lc \calg\rc^{-1}. $$

\item Every element $w\in \calg$ can be uniquely written in the form
\begin{equation}
\mlabel{eq:frbg}
w = w_1\cdots w_k\, \text{ with }\, w_i\in X^{\pm 1} \sqcup \lc \calg\rc^{\pm 1},
\end{equation}
with no adjacent factors being the inverse of each other. The factorization is called the \name{standard factorization} of $w$.
Elements of $X^{\pm 1} \sqcup \lc \calg\rc^{\pm 1}$ are called {\bf indecomposable.} Define the \name{breadth} of $w$ to be $\bre(w):=k$.

\item Define the  \name{(operator) degree} $\degp(w)$ of $w$ to be  the total number of occurrences of the operator $\lc ~\rc$ in $w\in \calg$.

\item For $w$ in~(\mref{eq:frbg}), define the  \name{depth} of $w$ to be
$$\dep(w):= \max \{ \dep(w_i) \mid 1\leq i\leq k\}, \,\text{ where }\, \dep(w_i) := \min\{n \mid w_i\in \calg_n\setminus \calg_{n-1} \}.$$

\end{enumerate}

\begin{lem}\cite{GGLZ} \mlabel{thm:freeopgp}
	The operated group $(\calg(X),P_X)$, together with the the natural inclusion $j_X:X\to \calg(X)$, is the free operated group on $X$.
\end{lem}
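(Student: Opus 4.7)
The plan is to verify the universal property by constructing the required operated-group morphism $\free{\varphi}:(\calg(X),P_X)\to (G,P)$ recursively along the directed system $\calg_0\hookrightarrow \calg_1\hookrightarrow\cdots$, using only the universal property of ordinary free groups at each level. Concretely, given $\varphi:X\to G$, I would produce a compatible family of group homomorphisms $\free{\varphi}_n:\calg_n\to G$ and then pass to the direct limit.

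For the base case, since $\calg_0=G(X)$ is the free group on $X$, the map $\varphi$ extends uniquely to a group homomorphism $\free{\varphi}_0:\calg_0\to G$. Inductively, assuming $\free{\varphi}_n:\calg_n\to G$ has been defined, I would set
\[
\psi_{n+1}:X\sqcup \lc\calg_n\rc\longrightarrow G,\qquad \psi_{n+1}(x):=\varphi(x),\quad \psi_{n+1}(\lc w\rc):=P\bigl(\free{\varphi}_n(w)\bigr),
\]
and then extend $\psi_{n+1}$ uniquely to a group homomorphism $\free{\varphi}_{n+1}:\calg_{n+1}=G(X\sqcup\lc\calg_n\rc)\to G$ by the universal property of free groups. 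A short induction shows that $\free{\varphi}_{n+1}$ restricts to $\free{\varphi}_n$ on $\calg_n$: it suffices to check agreement on the generating set $X\sqcup \lc\calg_{n-1}\rc$ of $\calg_n$, where on $X$ both equal $\varphi$, and on $\lc w\rc$ with $w\in \calg_{n-1}$ one has $\free{\varphi}_{n+1}(\lc w\rc)=P(\free{\varphi}_n(w))=P(\free{\varphi}_{n-1}(w))=\free{\varphi}_n(\lc w\rc)$ by the inductive hypothesis.

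This compatible family defines a group homomorphism $\free{\varphi}:=\dirlim \free{\varphi}_n:\calg(X)\to G$, and the compatibility $\free{\varphi}\circ P_X=P\circ \free{\varphi}$ is built into the construction: if $w\in \calg_n$, then $\free{\varphi}(\lc w\rc)=\free{\varphi}_{n+1}(\lc w\rc)=P(\free{\varphi}_n(w))=P(\free{\varphi}(w))$. For uniqueness, if $\Psi:(\calg(X),P_X)\to (G,P)$ is any operated-group morphism with $\Psi\circ j_X=\varphi$, induction on $\dep(w)$ (together with induction on $\bre(w)$ within each depth stratum) using the standard factorization \eqref{eq:frbg} forces $\Psi=\free{\varphi}$: on indecomposable elements of the form $x^{\pm 1}\in X^{\pm 1}$ agreement is immediate, while on $\lc w\rc^{\pm 1}\in \lc\calg\rc^{\pm 1}$ agreement follows from $\Psi(\lc w\rc)=P(\Psi(w))=P(\free{\varphi}(w))=\free{\varphi}(\lc w\rc)$ since $\dep(w)<\dep(\lc w\rc)$, and both maps are group homomorphisms so they coincide on products.

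The argument is essentially a bookkeeping exercise; the only nontrivial point is organizing the recursion so that the extensions at consecutive levels match under the natural inclusions $\calg_n\hookrightarrow \calg_{n+1}$ and so that the depth-induction for uniqueness is well-founded. Once this is handled, everything else reduces to repeated applications of the universal property of ordinary free groups.
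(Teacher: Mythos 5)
Your proof is correct, and it is essentially the intended argument: the paper itself states this lemma with a citation to [GGLZ] rather than reproving it, and the level-by-level extension $\free{\varphi}_n$ via the universal property of the free groups $\calg_n=G(X\sqcup\lc\calg_{n-1}\rc)$, followed by passage to the direct limit and a depth induction for uniqueness, is exactly the argument the recalled recursive construction of $\calg(X)$ is designed for. Your compatibility check on generators and the well-foundedness of the uniqueness induction (using $\dep(w)<\dep(\lc w\rc)$) are the only delicate points, and you handle both correctly.
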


\section{Averaging operators on groups, Lie algebras and Hopf algebras}
\mlabel{sec:avegp}
In this section, we first propose the concept of averaging group and give some examples and basic properties of averaging groups. Then we prove that an averaging group induces a rack and that the differentiation of a smooth averaging operator on a Lie group gives an averaging operator on the corresponding Lie algebra.

\subsection{Averaging groups and averaging Lie algebras}
The averaging operators on associative algebras were already implicitly studied by O. Reynolds~\mcite{Rey} in turbulence theory under the disguise of a Reynolds operator.
Similar to the case of associative algebras, averaging operators on Lie algebras were studied by Aguiar~\mcite{Ag}.

\begin{defi}
\begin{enumerate}
\item An {\bf averaging associative algebra} is an associative algebra $R$ with a linear operator $P: R\to R$ such that
$$P(a)P(b)=P(P(a)b)=P(aP(b)) \tforall a,b\in R.$$

\item  An {\bf averaging Lie algebra} is a Lie algebra $(L, [\,,\,])$ with a linear operator $A: L\to L$ such that
$$[A(a),A(b)]=A([A(a),b]) \tforall a,b\in L.$$
\end{enumerate}
\end{defi}

By the antisymmetric  property of the Lie bracket, we have
$$[A(a),A(b)]=-[A(b),A(a)]=-A([A(b),a])=A([a, A(b)]).$$
Thus an averaging Lie algebra $(L,[\,,\,], A)$ satisfies
\begin{equation}\mlabel{eq:alie}
[A(a),A(b)]=A([A(a),b])=A([A(a),b]) \tforall a,b\in L.
\end{equation}

Motivated by  ~(\mref{eq:alie}), we propose the following main concept studied in the present paper.

\begin{defi}
\mlabel{de:savegp}
A map $\A$ on a group $G$ is called an \name{ \aveo } if
	\begin{equation}
		\mlabel{eq:avegp}
\A(g)\A(h)=\A(\A(g)h)=\A(g\A(h)) \tforall g,h \in G.
	\end{equation}
Then the pair $(G,\A)$ is called an \name{\aveg }.
Further, if $G$ is a Lie group and $\A:G\to G$ is smooth, then
$(G,\A)$ is called an \name{\avelg}.
\end{defi}

\begin{rmk}\mlabel{re:paveo}
\begin{enumerate}
\item For an averaging group $(G, \A)$, the group algebra $\bfk [G]$ with the linear extension of $\A$ to $\A:\bfk [G]\rightarrow \bfk[G]$ is an averaging associative algebra.

\item Das~\mcite{Das} defined the concept of \name{\paveo} arising from studies of rack structures and Hopf algebras, which is a group $G$ together with a map $\A:G\rightarrow G$ such that
$\A(e) = e$ for the identity element $e\in G$ and
	\begin{equation*}
\Ad_{\A(g)}\A(h) = \A(\Ad_{\A(g)}h) \tforall g,h \in G.
	\end{equation*}
It should be noted that the defining equation~(\mref{eq:avegp}) is quite different from the above equation.
\end{enumerate}
\end{rmk}

The first example is that a group $G$  equipped with the identity map on $G$ is an averaging group.
We display the following other classes of examples for \avegs.

\begin{pro}
Let $G$ be a group.
\begin{enumerate}
\item  Let $z$ be a fixed element in the center of $G$. Define
 $$\A_z:G\longrightarrow G, \quad g\mapsto zg.$$
Then the pair $(G, \A_z)$ is an averaging group.\mlabel{item:cavo}

\item The group $G$ together with an idempotent group endomorphism $\A:G\to G$ is an averaging group.\mlabel{item:ieavo}
\end{enumerate}
\end{pro}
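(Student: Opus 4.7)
\medskip

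\noindent\textbf{Proof proposal.} Both parts amount to direct verifications of the defining identity~\meqref{eq:avegp}, so the plan is simply to unfold each side and reduce to elementary properties (centrality in part~\mref{item:cavo}; the endomorphism and idempotency conditions in part~\mref{item:ieavo}).

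For part~\mref{item:cavo}, I would fix $g,h\in G$ and evaluate each of the three expressions in~\meqref{eq:avegp} by substitution. The leftmost expression becomes $\A_z(g)\A_z(h)=zg\cdot zh$, which since $z$ lies in the center of $G$ equals $z^2 gh$. For the middle expression, $\A_z(\A_z(g)h)=\A_z(zgh)=z\cdot zgh=z^2gh$, and for the rightmost, $\A_z(g\A_z(h))=\A_z(gzh)=z\cdot gzh=z^2gh$, again using $gz=zg$. Hence all three agree, as required. The only place centrality is used is in rearranging $gz$ to $zg$; without it the second and third expressions would not collapse to $z^2 gh$.

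For part~\mref{item:ieavo}, I would use that $\A$ being a group endomorphism gives $\A(g)\A(h)=\A(gh)$ and that idempotency gives $\A\circ\A=\A$. Then $\A(\A(g)h)=\A(\A(g))\A(h)=\A(g)\A(h)$ and symmetrically $\A(g\A(h))=\A(g)\A(\A(h))=\A(g)\A(h)$, so both sides match $\A(g)\A(h)$.

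There is no real obstacle here: each verification is a one-line computation from the hypotheses, and the proposition serves only to record two natural families of examples. The most one needs to be mindful of is invoking centrality at the correct step in~\mref{item:cavo}, since without it $\A_z$ generally fails to be averaging.
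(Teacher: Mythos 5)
Your proposal is correct and follows essentially the same route as the paper: in part~(\ref{item:cavo}) both verifications reduce the three expressions in~\eqref{eq:avegp} to $z^2gh$ using centrality of $z$, and in part~(\ref{item:ieavo}) both use the endomorphism property together with $\A^2=\A$ to collapse everything to $\A(g)\A(h)$. No gaps; nothing further is needed.
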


\begin{proof}
The results respectively follow from
$$\A_z(\A_z(g)h)=zzgh=zgzh=\A_z(g)\A_z(h)= z(gzh) =\A_z(g\A_z(h))$$
and
$$\A(\A(g)h) = \A^2(g)\A(h) = \A(g)\A(h) =\A(g)\A^2(h)=\A(g\A(h)),$$
where $g,h\in G$.
\end{proof}

\begin{pro}
 Let $G$ be a group with two commutative \aveos $\A_1$ and $\A_2$.
Then the composition operator $\A_1\circ \A_2$ is also an \aveo on $G$.
In particular, the operator $(\A_1)^n$ is also  an \aveo on $G$ for any $n\geq1$.
\end{pro}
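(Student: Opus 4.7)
The plan is to set $B := \A_1\circ \A_2$ and verify directly that $B(g)B(h)=B(B(g)h)=B(gB(h))$ by unwinding the two averaging identities~(\ref{eq:avegp}) for $\A_1$ and $\A_2$ and using the commutativity hypothesis $\A_1\A_2=\A_2\A_1$ to shuffle the two operators past each other at the necessary moments. No sophisticated machinery should be needed; the whole statement is a formal manipulation.

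First I would rewrite the left-hand side
$B(g)B(h)=\A_1(\A_2(g))\,\A_1(\A_2(h))$
by applying the averaging identity for $\A_1$ to the elements $\A_2(g),\A_2(h)\in G$, obtaining two equivalent forms
$\A_1(\A_1\A_2(g)\cdot \A_2(h))=\A_1(\A_2(g)\cdot \A_1\A_2(h))$.
Next I would expand $B(B(g)h)=\A_1\A_2(\A_1\A_2(g)\cdot h)$: invoke commutativity to replace the inner $\A_1\A_2(g)$ by $\A_2\A_1(g)$, then apply the averaging identity for $\A_2$ to the elements $\A_1(g),h\in G$, producing $\A_1(\A_2\A_1(g)\cdot\A_2(h))$, which becomes $\A_1(\A_1\A_2(g)\cdot\A_2(h))$ after one more use of commutativity. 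This matches the first form above, giving $B(g)B(h)=B(B(g)h)$. The identity $B(g)B(h)=B(gB(h))$ is handled symmetrically, using the other halves of the averaging relations for $\A_1$ and $\A_2$.

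For the ``in particular'' statement, I would proceed by induction on $n$. The base case $n=1$ is trivial. For the inductive step, write $\A_1^n=\A_1\circ\A_1^{n-1}$; since $\A_1^{n-1}$ is averaging by the inductive hypothesis and commutes with $\A_1$, the first part of the proposition applies and yields that $\A_1^n$ is averaging.

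The main (and essentially only) obstacle is bookkeeping: at each step one must apply the correct half of an averaging identity to the correct pair of arguments, and insert commutativity at precisely the right spot. Once the ordering is set out, each equality is a single substitution, so the proof is short.
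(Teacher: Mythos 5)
Your proposal is correct and follows essentially the same route as the paper: a direct computation unwinding the averaging identity for $\A_1$, then for $\A_2$, inserting commutativity where the two operators must be exchanged, with the power case obtained by iterating the first statement (your induction makes explicit what the paper phrases as ``repeated use''). No gaps.
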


\begin{proof}
Let $g,h\in G$. By~\meqref{eq:avegp} and $\A_1\circ \A_2 = \A_2 \circ \A_1$,
{\small\begin{eqnarray*}
\Big( \A_1\circ \A_2 (g)\Big) \Big(\A_1\circ \A_2(h)\Big)
&=&\A_1\Big(\A_1\big(\A_2 (g))\A_2(h)\Big)\\
&=&\A_1\Big(\A_2\big(\A_1 (g))\A_2(h)\Big)\\
&=&\A_1 \circ \A_2\Big(\A_2(\A_1 (g))h\Big)\\
&=&\A_1\circ \A_2\Big(\A_2\circ \A_1(g)h \Big).
\end{eqnarray*}}
With the same argument,
$$\Big(\A_1\circ \A_2(g)\Big) \Big(\A_1\circ \A_2(h)\Big) =\A_1\circ \A_2(g\A_1\circ \A_2(h)).$$
The second part follows from the repeated use of the first one.
\end{proof}

The following result gives some basic properties of averaging groups.

\begin{pro}
Let $(G,\A)$ be an \aveg such that $\A(e)=e$ for the identity element $e$.
Then
\begin{enumerate}
\item $\A$ is idempotent, that is, $\A^2=\A$. \mlabel{item:idet}

\item   \mlabel{item:invp} $\A$ preserves the inverses of elements in $\im \A$, that is,
  \begin{equation}\mlabel{eq:pinv}
   \A(g)^{-1}=\A\Big(\A(g)^{-1}\Big)\tforall g\in G.
  \end{equation}

  \item $\A$ is a \paveo defined in Remark~\mref{re:paveo}.\mlabel{item:paveo}
\end{enumerate}
\mlabel{pro:prei}
\end{pro}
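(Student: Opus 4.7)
The plan is to derive all three items directly from the defining identity~\meqref{eq:avegp} together with the normalization $\A(e)=e$, without needing any additional machinery. The three statements proceed in order, each using the one before it.

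For item~\mref{item:idet}, the natural move is to specialize the averaging identity at the identity. Setting $h=e$ in $\A(g)\A(h)=\A(\A(g)h)=\A(g\A(h))$ and using $\A(e)=e$, the outer terms collapse to $\A(g)$ while the middle term becomes $\A(\A(g))=\A^{2}(g)$, giving $\A^{2}=\A$ at once. There is essentially no obstacle here.

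For item~\mref{item:invp}, the idea is to exploit the averaging identity with a clever choice of $h$ that produces $\A(e)=e$ on one side. Substituting $h=\A(g)^{-1}$ into the first equality $\A(g)\A(h)=\A(\A(g)h)$, the right-hand side becomes $\A(\A(g)\A(g)^{-1})=\A(e)=e$, so the left-hand side yields $\A(g)\A(\A(g)^{-1})=e$, and inverting gives~\meqref{eq:pinv}. Again a one-line computation once the substitution is chosen.

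Item~\mref{item:paveo} is the only part with any real content, and it will be the main step. The target is $\A(g)\A(h)\A(g)^{-1}=\A\bigl(\A(g)h\A(g)^{-1}\bigr)$ for all $g,h\in G$. The strategy is to apply~\meqref{eq:avegp} twice and insert~\meqref{eq:pinv} to rewrite $\A(g)^{-1}$ as $\A(\A(g)^{-1})$ so that the averaging identity can fire on the outer factor. Concretely, starting from $\A\bigl(\A(g)h\cdot \A(g)^{-1}\bigr)=\A\bigl(\A(g)h\cdot \A(\A(g)^{-1})\bigr)$, one application of the second equality in~\meqref{eq:avegp} (with the arguments $\A(g)h$ and $\A(g)^{-1}$) produces $\A(\A(g)h)\cdot \A(\A(g)^{-1})$, and then a further application of the first equality together with~\meqref{eq:pinv} collapses this to $\A(g)\A(h)\A(g)^{-1}$. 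The only subtle point is keeping track of which of the two equalities in~\meqref{eq:avegp} is being applied at each step, so that the $\A$ lands on the correct factor; writing the identity once as $\A(x\A(y))=\A(x)\A(y)$ and once as $\A(\A(x)y)=\A(x)\A(y)$ should make the bookkeeping transparent.
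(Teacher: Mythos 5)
Your proposal is correct and matches the paper's own argument: item (i) by specializing $h=e$, item (ii) by substituting $h=\A(g)^{-1}$ into $\A(g)\A(h)=\A(\A(g)h)$, and item (iii) by the same two applications of~\meqref{eq:avegp} combined with~\meqref{eq:pinv}, merely run from the right-hand side instead of the left. No gaps.
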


\begin{proof}
\meqref{item:idet} Using  ~\meqref{eq:avegp} and $\A(e)=e$,
$$\A(g)=\A(g)\A(e)=\A(\A(g)e)=\A^2(g)\tforall g\in G.$$
\par
\noindent
\meqref{item:invp} Once again, by  ~\meqref{eq:avegp} and $\A(e)=e$,
$$\A(g)\A\Big(\A(g)^{-1}\Big)=\A\Big(\A(g)\A(g)^{-1}\Big)=\A(e)=e\tforall g\in G,$$
and so $\A(g)^{-1}=\A\Big(\A(g)^{-1}\Big)$.
\par
\noindent
\meqref{item:paveo} By the definition of \paveos, we only need to prove that
$$\Ad_{\A(g)}\A(h) = \A(\Ad_{\A(g)}h)\tforall g,h\in G.$$
By Item~\meqref{item:invp} and  ~\meqref{eq:avegp},
\begin{eqnarray*}
\Ad_{\A(g)}\A(h) &=&\A(g)\A(h)\A(g)^{-1}\\
&=&\A(g)\A(h)\A\Big(\A(g)^{-1}\Big)\\
&=&\A(\A(g)h)\A\Big(\A(g)^{-1}\Big)\\
&=&\A\Big(\A(g)h\A\big(\A(g)^{-1}\big)\Big)\\
&=&\A\Big(\A(g)h\A(g)^{-1}\Big)\\
&=&\A(\Ad_{\A(g)}h),
\end{eqnarray*}
as required.
\end{proof}

The concept of disemigroup was introduced in~\cite{Lod}, whose linear extension $\bfk[G]$ is a diassociative algebra.
We are going to show that an \aveg induces a disemigroup.

\begin{defi}
A set $G$, equipped with two multiplications $\dashv, \vdash:G\times G \to G,$
is called a {\bf disemigroup} if
  \begin{eqnarray*}
(f\dashv g)\dashv h&=&f\dashv (g\dashv h),\\
(f\dashv g)\dashv h&=&f\dashv (g\vdash h),\\
(f\vdash g)\dashv h&=&f\vdash (g\dashv h),\\
(f\dashv g)\vdash h&=&f\vdash(g\vdash h),\\
(f\vdash g)\vdash h&=&f\vdash(g\vdash h) \tforall f,g,h\in G.
  \end{eqnarray*}
Furthere if there is an element $e\in G$ such that
$$g\dashv e=g=e\vdash g\tforall{g\in G},$$
then it is called a {\bf dimonoid}.
\end{defi}

\begin{pro}\mlabel{pro:digr}
Let $(G,\A)$ be an \aveg. Define
$$\dashv:G\times G \to G,\quad (g,h)\mapsto  g\dashv h:=g\A(h),$$
$$\vdash:G\times G \to G, \quad (g,h)\mapsto  g\vdash h:=\A(g)h.$$
Then the triple $(G, \dashv, \vdash)$ is a disemigroup.
Moreover, if $\A(e)=e$, then
$(G, \dashv, \vdash)$ is a dimonoid.
\end{pro}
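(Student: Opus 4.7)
The plan is to verify the five disemigroup axioms by direct substitution, reducing each to a single application of the averaging identity~\meqref{eq:avegp} together with associativity in $G$.

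First I would unpack each axiom in terms of group multiplication. For example, the first axiom $(f\dashv g)\dashv h = f\dashv (g\dashv h)$ becomes
\[
f\A(g)\A(h) \;=\; f\,\A\bigl(g\A(h)\bigr),
\]
which, after cancelling the leading $f$, is precisely one half of~\meqref{eq:avegp}. The second axiom similarly reduces to the other half $\A(g)\A(h) = \A(\A(g)h)$. The two axioms whose outer operation is $\vdash$, namely (4) and (5), unfold to $\A(f\A(g))h = \A(f)\A(g)h$ and $\A(\A(f)g)h = \A(f)\A(g)h$, both of which are immediate from~\meqref{eq:avegp} by stripping the trailing factor $h$. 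The remaining axiom (3) mixes $\vdash$ and $\dashv$ at the outer level and turns out not to use the averaging hypothesis at all: both sides evaluate to $\A(f)\,g\,\A(h)$ purely by associativity of the group product.

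The only bookkeeping is to match each axiom with the correct form of~\meqref{eq:avegp}; since both equalities in~\meqref{eq:avegp} get used, the averaging hypothesis is fully exploited. I do not anticipate any genuine obstacle here—the key structural reason it all works is that $\dashv$ and $\vdash$ are the group multiplication twisted by $\A$ applied on the right and left, respectively, and the averaging identity is exactly the statement that $\A$ passes through one factor of a product.

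For the dimonoid addendum, assuming $\A(e)=e$, a one-line check gives $g\dashv e = g\A(e)=ge=g$ and $e\vdash g = \A(e)g=eg=g$ for every $g\in G$, so $e$ serves as the required two-sided unit in the dimonoid sense.
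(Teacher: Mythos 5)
Your proposal is correct and follows essentially the same route as the paper: each of the five axioms is verified by expanding both sides via the group product and invoking one of the two equalities in~\meqref{eq:avegp} (with the mixed axiom needing only associativity), and the dimonoid part is the same one-line check using $\A(e)=e$. The only cosmetic difference is that you phrase the reduction as cancelling a factor of $f$ or $h$, while the paper simply rewrites one side into the other, which is logically equivalent in a group.
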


\begin{proof}
Let $f, g, h\in G$.
The first part follows from that
\begin{eqnarray*}
(f\dashv g)\dashv h&=&\big(f\A(g)\big)\A(h)=f\big(\A(g)\A(h)\big)=f\A\big(g\A(h)\big)=f\dashv (g\dashv h),\\
(f\dashv g)\dashv h&=&\big(f\A(g)\big)\A(h)=f\big(\A(g)\A(h)\big)=f\A\big(\A(g)h\big)=f\dashv (g\vdash h),\\
(f\vdash g)\dashv h&=&\big(\A(f)g\big)\A(h)=\A(f)\big(g\A(h)\big)=f\vdash (g\dashv h),\\
(f\dashv g)\vdash h&=&\A\big(f\A(g)\big)h=\big(\A(f)\A(g)\big)h=\A(f)\big(\A(g)h\big)=f\vdash(g\vdash h),\\
(f\vdash g)\vdash h&=&\A\big(\A(f)g\big)h=\big(\A(f)\A(g)\big)h=\A(f)\big(\A(g)h\big)=f\vdash(g\vdash h).
\end{eqnarray*}
The second one is from $$g\dashv e=g\A(e)=g=\A(e)g=e\vdash g.$$
Hence the result holds.
\end{proof}

An \aveg with a mild condition induces a rack.
Notice that the tangent space of a Lie rack~\mcite{Ki} is a Leibniz algebra, similar to the case of Lie groups and Lie algebras.

\begin{pro}\mlabel{pro:rac}
 Let $(G,\A)$ be an \aveg with $\A(e)=e$. Define a multiplication on $G$:
 $$\rack :G\times G \to G, \quad (g,h)\mapsto  g\rack h:=\A(g)h \A(g)^{-1}.$$
Then
  \begin{eqnarray*}
f\rack (g\rack h)=(f\rack g)\rack (f\rack h)\tforall f,g,h\in G,
  \end{eqnarray*}
and for any $g\in G$, the map
$$L_g:G\to G, \quad h\mapsto g\rack  h$$  is a bijection.
In  this case, the pair $(G, \rack)$ is called a {\bf  rack}.
\end{pro}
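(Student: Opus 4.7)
The plan is to verify the self-distributivity identity by a direct calculation after unfolding $\rack$, and then to observe that each $L_g$ is an inner automorphism of $G$ and hence a bijection. The nontrivial input is part~\meqref{item:paveo} of Proposition~\mref{pro:prei}: since $\A(e)=e$, the averaging operator $\A$ commutes with conjugation by any $\A(f)$, that is, $\A(\Ad_{\A(f)} g) = \Ad_{\A(f)}\A(g)$ for all $f,g\in G$.

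First I would expand the left-hand side directly from the definition of $\rack$:
\begin{equation*}
f \rack (g \rack h) \;=\; \A(f)\bigl(\A(g)\, h\, \A(g)^{-1}\bigr)\A(f)^{-1} \;=\; \A(f)\A(g)\, h\, \A(g)^{-1} \A(f)^{-1}.
\end{equation*}
For the right-hand side, the essential sub-claim is $\A(f\rack g) = \A(f)\A(g)\A(f)^{-1}$. This is exactly where Proposition~\mref{pro:prei}\meqref{item:paveo} enters, giving
\begin{equation*}
\A(f\rack g) \;=\; \A\bigl(\A(f)\, g\, \A(f)^{-1}\bigr) \;=\; \A(f)\,\A(g)\,\A(f)^{-1}.
\end{equation*}
Plugging this into $(f\rack g)\rack (f\rack h) = \A(f\rack g)\,(f\rack h)\,\A(f\rack g)^{-1}$, the inner factors $\A(f)^{-1}\A(f)$ telescope, and the right-hand side collapses to $\A(f)\A(g)\, h\, \A(g)^{-1}\A(f)^{-1}$, matching the left-hand side.

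For the bijectivity of $L_g$, note that $L_g(h) = \A(g)\, h\, \A(g)^{-1}$ is precisely conjugation by $\A(g)\in G$, i.e., an inner automorphism of $G$, whose two-sided inverse is conjugation by $\A(g)^{-1}$. The only genuinely subtle point of the proof is the conceptual step of recognising that Proposition~\mref{pro:prei}\meqref{item:paveo} is the right tool to commute $\A$ past the conjugation appearing in $\A(f\rack g)$; once this is in place, both verifications reduce to routine cancellations inside the group $G$.
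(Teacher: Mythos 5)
Your proposal is correct and follows essentially the same route as the paper: both hinge on the pointed averaging identity $\A(\Ad_{\A(f)}g)=\Ad_{\A(f)}\A(g)$ from Proposition~\mref{pro:prei}~\meqref{item:paveo} to handle $\A(f\rack g)$, and both establish bijectivity of $L_g$ by exhibiting conjugation by $\A(g)^{-1}$ as the inverse. The only cosmetic difference is that you simplify both sides to $\A(f)\A(g)h\A(g)^{-1}\A(f)^{-1}$, while the paper inserts $\A(f)^{-1}\A(f)$ into the left-hand side and rewrites it into the right-hand side directly.
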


\begin{proof}
Let $f,g,h\in G$.
By Proposition~\mref{pro:prei}, $\A$ is a pointed averaging operator, that is
\begin{equation}\mlabel{eq:pavg}
\A(g)\A(h)\A(g)^{-1}=\A\Big(\A(g)h\A(g)^{-1}\Big).
\end{equation}
Further by the definition of $\rack$,
\begin{eqnarray*}
f\rack (g\rack h)&=&f\rack \Big(\A(g)h \A(g)^{-1}\Big)\\
&=& \A(f) \Big(\A(g)h \A(g)^{-1}\Big)\A(f)^{-1} \\
&=& \A(f) \A(g)h \big(\A(f)\A(g)\big)^{-1} \\
&=& \A(f) \A(g)\A(f)^{-1}\A(f) h \big(\A(f)\A(g)\A(f)^{-1}\A(f)\big)^{-1} \\
&\overset{(\ref{eq:pavg})}{=}&\A\Big(\A(f)g \A(f)^{-1}\Big)\A(f) h \Big(\A\Big(\A(f)g \A(f)^{-1}\Big)\A(f)\Big)^{-1}\\
&=&\A\Big(\A(f)g \A(f)^{-1}\Big)\Big(\A(f)h \A(f)^{-1}\Big)\A\Big(\A(f)g \A(f)^{-1}\Big)^{-1}\\
&=&\Big(\A(f)g \A(f)^{-1}\Big)\circ\Big(\A(f)h \A(f)^{-1}\Big)\\
&=&(f\rack g)\rack (f\rack h).
\end{eqnarray*}
Moreover, for any $g\in G$, define a map
$$L_g^{-1}:G\to G, \quad h\mapsto \A(g)^{-1}h \A(g).$$
Then
$$L_g\circ L_g^{-1}(h)=L_g\Big(\A(g)^{-1}h \A(g)\Big)=\A(g)\A(g)^{-1}h \A(g)\A(g)^{-1}=\id_G(h)$$
and
$$L_g^{-1}\circ L_g(h)=L_g\Big(\A(g)h \A(g)^{-1}\Big)=\A(g)^{-1}\A(g)h \A(g)^{-1}\A(g)=\id_G(h)\tforall h\in G.$$
Thus the map $L_g$ is a bijection.
\end{proof}

In analogy to the fact that Lie groups are integrations of Lie algebras, we now show that the differentiation of a smooth averaging operator on a Lie group gives an averaging operator on the corresponding Lie algebra.

\begin{thm}
Let $(G, \A)$ be an \avelg  such that $\A(e)=e$. Denote by $\mathfrak{g} = T_e G$  the Lie algebra of $G$. Define
$\Aa:=\A_{\ast e}: g\to g$
to be the tangent map of $\A$ at the identity element $e$. Then $(\mathfrak{g}, \Aa)$ is an averaging Lie algebra.
\mlabel{thm:avega}
\end{thm}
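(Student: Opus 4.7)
The plan is to reduce the claim to a chain-rule computation by first invoking the pointed averaging identity already established in Proposition~\mref{pro:prei}(iii). Since $\A(e)=e$, for every $g,h\in G$ we have
$$\A(g)\A(h)\A(g)^{-1} \;=\; \A\!\left(\A(g)\,h\,\A(g)^{-1}\right),$$
that is, $\Ad_{\A(g)}\circ \A = \A\circ \Ad_{\A(g)}$ as smooth self-maps of $G$, both of which send the identity $e$ to itself. This single identity, read in the appropriate variables, will deliver the averaging Lie relation.

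First I would differentiate with respect to $h$ at $h=e$. The tangent map at $e$ of the conjugation $\Ad_{\A(g)}:G\to G$ is, by definition, the adjoint linear map $\Ad(\A(g)):\mathfrak{g}\to\mathfrak{g}$, and the tangent map of $\A$ at $e$ is $\Aa$. The chain rule therefore yields the intertwining relation
$$\Ad(\A(g))\circ \Aa \;=\; \Aa\circ \Ad(\A(g))$$
as linear endomorphisms of $\mathfrak{g}$, valid for every $g\in G$.

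Next I would vary $g$ to extract the Lie bracket. Fix $x,y\in\mathfrak{g}$ and choose a smooth curve $g(t)$ in $G$ with $g(0)=e$ and $\dot g(0)=x$. Because $\A$ is smooth and $\A(e)=e$, the composite curve $\A(g(t))$ passes through $e$ at $t=0$ with tangent vector $\Aa(x)$. Applying the intertwining relation to $y$ along this curve gives
$$\Ad(\A(g(t)))(\Aa(y)) \;=\; \Aa\!\left(\Ad(\A(g(t)))(y)\right).$$
Differentiating at $t=0$ and using the standard formula $\tfrac{d}{dt}\big|_{t=0}\Ad(\gamma(t))(z)=[\dot\gamma(0),z]$ for any smooth curve $\gamma$ through $e$, together with the linearity of $\Aa$, one obtains
$$[\Aa(x),\Aa(y)] \;=\; \Aa\!\left([\Aa(x),y]\right).$$

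The remaining identity $[\Aa(x),\Aa(y)]=\Aa([x,\Aa(y)])$ then comes for free from the antisymmetry of the Lie bracket, exactly as in the derivation of~\meqref{eq:alie} from its one-sided form. The only delicate point in the whole argument is the initial tangent-space calculation, which relies crucially on the hypothesis $\A(e)=e$ to ensure that both sides of the group identity are based at $e$ and that $\tfrac{d}{dt}\big|_{t=0}\A(g(t))=\Aa(x)$; once this is in place, the extraction of the Lie bracket via the standard $\Ad$-derivative formula is essentially mechanical.
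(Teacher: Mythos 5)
Your proof is correct, and its core coincides with the paper's: both extract the averaging Lie identity by differentiating at the identity the pointed averaging relation $\A(g)\A(h)\A(g)^{-1}=\A\big(\A(g)h\A(g)^{-1}\big)$, which is available because $\A(e)=e$. The difference is in execution. The paper works with exponential curves and the second-derivative formula \meqref{eq:lie} for the bracket, re-deriving the pointed relation inline from \meqref{eq:avegp} and \meqref{eq:pinv}, and replaces $\exp^{t\Aa(u)}$ by $\A(\exp^{tu})$ inside the double derivative on the strength of \meqref{eq:tang} --- a step that is legitimate because the right-hand side of \meqref{eq:lie} depends only on the first-order data of the two curves, though the paper does not comment on this. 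You instead quote Proposition~\mref{pro:prei}\meqref{item:paveo} directly and differentiate in two first-order stages: first in $h$ at $e$, which by the chain rule gives the intertwining $\Ad(\A(g))\circ\Aa=\Aa\circ\Ad(\A(g))$ on $\frakg$ for every $g$, and then in $g$ along an arbitrary curve through $e$ with velocity $x$, using that the differential of $\Ad$ at $e$ is $\mathrm{ad}$ and that the fixed linear map $\Aa$ commutes with $\tfrac{d}{dt}$. This two-stage differentiation buys a cleaner and more standard Lie-theoretic argument, avoiding the exponential map and the curve substitution inside a second derivative; the paper's version is more self-contained in that it keeps the entire computation at the level of the group identity \meqref{eq:avegp}. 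Your closing remark about the symmetric identity is harmless but not needed, since the paper's definition of an averaging Lie algebra only requires $[\Aa(x),\Aa(y)]=\Aa([\Aa(x),y])$.
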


\begin{proof}
Denote by $\exp : \mathfrak{g} \to G$ the exponential map.
For sufficiently small $t$, we have
\begin{equation}\mlabel{eq:tang}
\frac{d}{dt}\Big|_{t=0}\A(\exp^{tu})=\frac{d}{dt}\Big|_{t=0}\exp^{t\Aa(u)}=\Aa(u)\tforall u\in \mathfrak{g}.
\end{equation}
Notice that the Lie bracket on $\mathfrak{g}$ is given by
\begin{equation}\mlabel{eq:lie}
[u,v]=\frac{d^2}{dtds}\Big|_{t,s=0}\exp^{tu}\exp^{sv}\exp^{-tu}\tforall u,v\in \mathfrak{g}.
\end{equation}
For any $u,v\in \mathfrak{g}$,
\begin{eqnarray*}
[\Aa(u),\Aa(v)]&=&\frac{d^2}{dt\,ds}\Big|_{t,s=0}\exp^{t\Aa(u)}\exp^{s\Aa(v)}\exp^{-t\Aa(u)} \hspace{2cm}(\text{by  ~\meqref{eq:lie}})\\
&=&\frac{d^2}{dt\,ds}\Big|_{t,s=0}\A\Big(\exp^{tu}\Big)
\A\Big(\exp^{sv}\Big)\A\Big(\exp^{tu}\Big)^{-1}\hspace{1cm}(\text{by  ~\meqref{eq:tang}})\\
&=&\frac{d^2}{dt\,ds}\Big|_{t,s=0}\A\Big(\exp^{tu}\Big)
\A\Big(\exp^{sv}\Big)\A\Big(\A\big(\exp^{tu}\big)^{-1}\Big)\hspace{1cm}(\text{by  ~\meqref{eq:pinv}})\\
&=&\frac{d^2}{dt\,ds}\Big|_{t,s=0}\A\Big(\A\big(\exp^{tu}\big)
\exp^{sv}\Big)\A\Big(\A\big(\exp^{tu}\big)^{-1}\Big)\hspace{1cm}(\text{by  ~\meqref{eq:avegp}})\\
&=&\frac{d^2}{dt\,ds}\Big|_{t,s=0}\A\bigg(\A\big(\exp^{tu}\big)\exp^{sv}
\A\Big(\A\big(\exp^{tu}\big)^{-1}\Big)\bigg)\hspace{1cm}(\text{by  ~\meqref{eq:avegp}})\\
&=&\frac{d^2}{dt\,ds}\Big|_{t,s=0}\A\left(\A\big(\exp^{tu}\big)\exp^{sv}
\A\big(\exp^{tu}\big)^{-1}\right)\hspace{1cm}(\text{by  ~\meqref{eq:pinv}})\\
&=&\A_{\ast e}\left(\frac{d^2}{dt\,ds}\Big|_{t,s=0}\A\Big(\exp^{tu}\Big)\exp^{sv}
\A\Big(\exp^{tu}\Big)^{-1}\right) \\
&=& \Aa \left(\frac{d^2}{dt\,ds}\Big|_{t,s=0} \exp^{tA(u)}\exp^{sv}
\exp^{-tA(u)} \right)\hspace{1cm}(\text{by  ~\meqref{eq:tang}})\\
&=&\Aa([\Aa(u),v]) \hspace{2cm}(\text{by  ~\meqref{eq:lie}}).
\end{eqnarray*}
Therefore $(\mathfrak{g}, \Aa)$ is an averaging Lie algebra.
\end{proof}

\subsection{Averaging Hopf algebras} This subsection is devoted to averaging Hopf algebras, in particular
the relationship between averaging groups and averaging Hopf algebras.

\begin{defi}
 Let $(H, \mu, \eta, \bigtriangleup, \iota, S)$ be a Hopf algebra.
 A coalgebra map $\Aa$ on $H$ is called an {\bf \aveo} if
\begin{equation}\mlabel{eq:avehopf}
\Aa(a)\Aa(b)=\Aa(\Aa(a)b)=\Aa(a\Aa(b))\tforall a,b\in H.
\end{equation}
In this case, the pair $(H, \Aa)$ is called an {\bf averaging  Hopf algebra}.
\end{defi}

Notice that the identity map $\id_H$ is an averaging operator on $H$.
The next result shows that each cocommutative Hopf algebra equipped with the idempotent antipode is an averaging Hopf algebra.

\begin{pro}
  Let $(H, \mu, \eta, \bigtriangleup, \iota, S)$ be a cocommutative Hopf algebra with $S^2=S$. Then $(H, S)$ is an  averaging  Hopf algebra.
\end{pro}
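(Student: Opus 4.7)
Two facts must be verified for the pair $(H,S)$ to be an averaging Hopf algebra: (i) $S$ is a coalgebra map on $H$, and (ii) $S$ satisfies the averaging identity \meqref{eq:avehopf}.

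Part (i) is the direct payoff of cocommutativity. The antipode of any Hopf algebra is always an anti-coalgebra homomorphism, so $\Delta\circ S=(S\otimes S)\circ\tau\circ\Delta$ with $\tau$ the twist, and $\iota\circ S=\iota$ holds unconditionally. Applying the cocommutativity identity $\tau\circ\Delta=\Delta$ upgrades the first equality to $\Delta\circ S=(S\otimes S)\circ\Delta$, so $S$ is indeed a coalgebra map.

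Part (ii) will follow from the classical involutivity of the antipode in a cocommutative Hopf algebra, $S^2=\id_H$. I would first recall (or reprove in one line using the convolution algebra) that, by cocommutativity,
\[
(S^2\ast S)(a)=S\Bigl(\textstyle\sum a_{(2)}S(a_{(1)})\Bigr)=S\Bigl(\textstyle\sum a_{(1)}S(a_{(2)})\Bigr)=S(\eta\iota(a))=\eta\iota(a),
\]
so $S^2$ is a left convolution inverse of $S$; since $S$ already has $\id_H$ as a two-sided convolution inverse, uniqueness yields $S^2=\id_H$. Combined with the present hypothesis $S^2=S$ this forces $S=S^2=\id_H$, whereupon each of the three expressions $S(a)S(b)$, $S(S(a)b)$, $S(aS(b))$ collapses to $ab$, and \meqref{eq:avehopf} is trivially satisfied.

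\emph{Main obstacle.} There is essentially no obstacle, since the proposition reduces via the classical involutivity of the antipode to the identity case $S=\id_H$. A more computational path handles the outer equalities $S(S(a)b)=S(b)\,S^2(a)=S(b)S(a)=S^2(b)\,S(a)=S(aS(b))$ painlessly from the anti-algebra property $S(xy)=S(y)S(x)$ together with $S^2=S$; the subtle step is matching $S(a)S(b)=S(ba)$ against $S(b)S(a)=S(ab)$, and this last matching ultimately requires precisely the collapse $S=\id_H$ coming from $S^2=\id_H$ and $S^2=S$.
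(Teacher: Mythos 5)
Your proof is correct, but it runs along a genuinely different route from the paper's. The paper verifies the averaging identity \meqref{eq:avehopf} by a direct computation: cocommutativity is used only to upgrade the anti-coalgebra map $S$ to a coalgebra map, and then the chain
$S(a)S(b)=S(ba)=S(S(ba))=S(S(a)S(b))=S(S(b))S(S(a))=S(b)S(a)$,
which uses only anti-multiplicativity and $S^2=S$, yields $S(a)S(b)=S(b)S(a)=S(S(b))S(a)=S(aS(b))$ and likewise $=S(S(a)b)$. You instead invoke the classical fact that the antipode of a cocommutative Hopf algebra is involutive, so $S^2=\mathrm{id}_H$ together with the hypothesis $S^2=S$ forces $S=\mathrm{id}_H$, after which the averaging identity is vacuous. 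Your convolution argument for $S^2=\mathrm{id}_H$ is the standard one and is valid over any base ring, so the proof stands; moreover it exposes something the paper's computation hides, namely that under the stated hypotheses the proposition produces nothing beyond the identity operator already noted to be an averaging operator, which is a worthwhile observation about the scope of the statement. What the paper's computational route buys in exchange is that the averaging identity for $S$ is derived from $S^2=S$ and anti-multiplicativity alone, independently of cocommutativity (which enters only through the coalgebra-map requirement).

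One small inaccuracy in your closing remark: matching $S(a)S(b)=S(ba)$ with $S(b)S(a)=S(ab)$ does not ``ultimately require'' the collapse $S=\mathrm{id}_H$. As the chain above shows, $S(a)S(b)=S(b)S(a)$ already follows formally from $S^2=S$ and the anti-algebra property, with no appeal to involutivity or cocommutativity. This does not affect the validity of your main argument, only the claim about what the computational path needs.
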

\begin{proof}
It is well known that $S$ is an algebra and coalgebra anti-homomorphism, that is,
$$S(ab)=S(b)S(a),\quad \Delta\circ S(a)=\sum_{(a)}S(a_2)\otimes S(a_1)\tforall a,b\in H.$$
Since $H$ is cocommutative,  $S$ is a coalgebra homomorphism.
Further by $S^2=S$, we have
$$S(a)S(b)=S(ba)=S(S(ba))=S(S(a)S(b))=S(S(b))S(S(a))=S(b)S(a).$$
Thus
$$S(a)S(b)=S(b)S(a)=S(S(b))S(a)=S(aS(b))$$
and
$$S(a)S(b)=S(b)S(a)=S(b)S(S(a))=S(S(a)b),$$
as required.
\end{proof}

Recall that for any group $G$, its group algebra $\bfk[G]$ is a cocommutative Hopf algebra with diagonal coproduct
\begin{equation*}\mlabel{eq:ahopf}
\Delta:\bfk[G]\to \bfk[G]^{\otimes2},\quad g\mapsto g\otimes g
\end{equation*}
and antipode
$$S: \bfk[G]\to \bfk[G],\quad g\mapsto g^{-1}.$$

\begin{thm}\mlabel{thm:hopf}
Let $G$ be a group and $\A:G\rightarrow G$ a map. Denote by
\begin{equation*}\mlabel{eq:lave}
\Aa: \bfk[G] \to \bfk[G],\quad \sum_i\alpha_ig_i\mapsto\sum_i\alpha_i\A(g_i)
\end{equation*}
the linear extendsion of $\A$. Then the pair $(G, \A)$ is an averaging group if and only if the pair $(\bfk[G], \Aa)$ is an averaging Hopf algebra.
\end{thm}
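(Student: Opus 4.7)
The plan is to verify both directions by exploiting the fact that $G$ is a basis of $\bfk[G]$ and that $\A$ takes values in $G$, so nothing more than linearity is needed to transfer the averaging identity between the two settings.

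First I would check that $\Aa$ is automatically a coalgebra map whenever $\A\colon G\to G$ is any set map. On a basis element $g\in G$, the diagonal coproduct gives
\begin{equation*}
\Delta\bigl(\Aa(g)\bigr)=\Delta(\A(g))=\A(g)\otimes\A(g)=\Aa(g)\otimes\Aa(g)=(\Aa\otimes\Aa)\Delta(g),
\end{equation*}
and extending by linearity shows $\Delta\circ\Aa=(\Aa\otimes\Aa)\circ\Delta$. The counit is preserved for the same reason. So the coalgebra-map hypothesis in the definition of an averaging Hopf algebra is free in this setting, and the statement reduces to equivalence of the multiplicative identity~\meqref{eq:avehopf} for $\Aa$ with the group-theoretic identity~\meqref{eq:avegp} for $\A$.

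For the forward direction, assuming $(G,\A)$ is an averaging group, I would pick arbitrary $a=\sum_i\alpha_i g_i$ and $b=\sum_j\beta_j h_j$ in $\bfk[G]$ and expand
\begin{equation*}
\Aa(a)\Aa(b)=\sum_{i,j}\alpha_i\beta_j\A(g_i)\A(h_j),\qquad \Aa(\Aa(a)b)=\sum_{i,j}\alpha_i\beta_j\A(\A(g_i)h_j),
\end{equation*}
and similarly for $\Aa(a\Aa(b))$. Applying~\meqref{eq:avegp} to each pair $(g_i,h_j)$ termwise gives equality of all three expressions, which is~\meqref{eq:avehopf}. For the backward direction, given that $(\bfk[G],\Aa)$ is an averaging Hopf algebra, I would restrict~\meqref{eq:avehopf} to $a=g$, $b=h$ with $g,h\in G\subset\bfk[G]$; since $\Aa(g)=\A(g)$ and $\Aa(\A(g)h)=\A(\A(g)h)$ because $\A(g)h\in G$, the identity reads
\begin{equation*}
\A(g)\A(h)=\A(\A(g)h)=\A(g\A(h)),
\end{equation*}
which is precisely~\meqref{eq:avegp}.

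I do not anticipate a genuine obstacle here; the only point worth flagging is the need to confirm that $\Aa$ is a well-defined coalgebra map so that the equivalence is stated against the correct definition of averaging Hopf algebra. Once that is noted, the rest is a matter of bookkeeping with linear extensions and the fact that $G$ is a $\bfk$-basis of $\bfk[G]$ consisting of group-like elements.
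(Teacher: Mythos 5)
Your proof is correct and follows essentially the same route as the paper, which simply invokes~\meqref{eq:avegp}, \meqref{eq:avehopf} and linearity; your write-up merely makes explicit the bookkeeping (termwise expansion on the basis $G$ and restriction to group elements) that the paper leaves implicit. The added verification that $\Aa$ is automatically a coalgebra map, since group elements are group-like and $\A(G)\subseteq G$, is a worthwhile detail the paper omits, but it does not change the argument.
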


\begin{proof}
It follows from~\meqref{eq:avegp}, \meqref{eq:avehopf} and linearity.
\end{proof}

\begin{rmk}
It was shown~\mcite{Gon} that a Rota-Baxter operator on a Lie algebra can be extended to the Hopf algebra of its universal enveloping algebra. However, the method in~\mcite{Gon} fails for the case of averaging operators.
It is challenging to extend an averaging operator from a Lie algebra to the Hopf algebra of its universal enveloping algebra.
\end{rmk}

\section{Free averaging groups}
\mlabel{sec:rbgp}
In this section, we construct explicitly the free averaging group on a set. To be specific, we first give the following concept.

\begin{defi}
Let $X$ be a set. The \name{free averaging group} on $X$ is an averaging group $(\frakA(X), \A_X)$ together with a map $j_X:X\to \frakA(X)$ such that, for any averaging group $(G,\A)$ and map $\varphi:X\to G$, there is a unique morphism $\free{\varphi}:(\frakA(X),\A_X)\to (G,\A)$ of averaging groups such that $\free{\varphi}\circ j_X=\varphi$.
\end{defi}

A formal construction of free averaging groups obtained by taking a quotient as follows.
Let a set $X$ be given and let $(\calg(X),P_X)$ be the free operated group generated by $X$. Let $N_X$ be the normal operated subgroup of $\calg(X)$ generated by elements of the form
$$ P_X(u)P_X(v) \Big(P_X(u{P_X(v)})\Big)^{-1},\quad P_X(P_X(u)v) \Big(P_X(u{P_X(v)})\Big)^{-1}
\tforall u, v\in \calg(X).$$
Then the quotient group $\calg(X)/N_X$, with the operator inherited from $P_X$ modulo $N_X$, is automatically the free averaging group on $X$.

We next give a direct and explicit construction of the free averaging group on $X$.
It is known~\mcite{GGLZ} that the free Rota-Baxter group on a set is defined on the set of Rota-Baxter group words $\frakR(X) \subset \calg(X)$ consisting of words that do not contain a subword of the form $\lc u\rc\lc v\rc$ with $u, v\in \calg(X)$. This motivates us to consider the averaging
case in a similar way. It is natural to choose the subset of $\calg(X)$  that do not contain a subword of
the forms $\lc u\rc\lc v\rc$  and $\lc \lc u\rc v\rc$ with $u,v\in \calg(X)$.
However, this condition is not enough. Indeed, for an averaging group $(G, \A)$ and $u,v\in G$,
\begin{equation}\mlabel{eq:aa2}
\A(u  \A^2(v)) =   \A(u)   \A^2(v) =   \A(  \A(u)  \A(v)) =   \A^2(u  \A(v)),
\end{equation}
and repeatedly,
\begin{equation*}\mlabel{eq:aan}
\A(u  \A^n(v)) =    \A^n(u  \A(v)),
\end{equation*}
which, together with (\mref{eq:avegp}), implies that
\begin{align*}
 \A^s(u)\A^t(v) =&\ \A^s(u\A^t(v)) = \A^{s+t-1}(u\A(v))\\
 \A(\A^s(u)v)=&\ \A(u\A^s(v)) = \A^s(u\A(v)) \tforall u,v\in G, \, s,t\geq 1.
\end{align*}
Notice also that $$\Big\{ \lc u \lc v\rc^{(n)}\rc \mid u,v\in  \calg(X), n\geq 1 \Big\} \subseteq \Big\{ \lc u \lc v\rc^{(2)}\rc \mid u,v\in  \calg(X)\Big\},$$
since $\lc v\rc^{(n)}$ can be viewed $\lc \lc v\rc^{(n-2)} \rc^{(2)}$.
The equation (\mref{eq:aa2}) motives us to give the following concept.

\begin{defi}
An element $w\in\calg(X)$ is called an \name{\aveg word} on a set $X$ if $w$ does not contain any subword of the form $\lc u\rc\lc v\rc$,  $\lc \lc u\rc v\rc$ or $\lc u\lc v\rc^{(2)}\rc$ with $u, v\in \calg(X)$.
Denote by $\rbgw$ the set of all \aveg words on $X$.
\end{defi}

We are going to define a multiplication $$\diamond:\rbgw \times \rbgw \rightarrow \rbgw, \quad (u, v) \mapsto u\diamond v$$ by induction on $(\dep(u), \dep(v))$ as follows. For the initial step of $(\dep(u), \dep(v)) = (0,0)$, define $u\diamond v:= uv$ to be the concatenation of $u$ and $v$. Let $m,n\geq 0$ be given such that $(m,n)>(0,0)$ lexicographically. Suppose that the product $u\diamond v$ have been defined for $u, v\in \rbgw$ with $(\dep(u), \dep(v))<(m,n)$ lexicographically and consider $u, v\in \rbgw$ with $(\dep(u), \dep(v))=(m,n)$.
There are two cases to consider: $\bre(u)=\bre(v)=1$ and $\bre(u)+\bre(v)>2$.

\noindent{\bf Case 1.} If $\bre(u)=\bre(v)=1$, then define
\begin{equation}\mlabel{eq:bre1}
u\diamond v:=\left\{
  \begin{array}{ll}
    \lc u'\diamond\lc v' \rc\rc^{(s+t-1)}, & \hbox{if ~$u=\lc u' \rc^{(s)}, v=\lc v'\rc^{(t)}$, $u'\notin\lc\rbgw\rc$;} \\
     \big(\lc v'\diamond\lc u'\rc\rc^{(s+t-1)}\big)^{-1}, & \hbox{if ~$u=\big(\lc u' \rc^{(s)}\big)^{-1}, v=\big(\lc v' \rc^{(t)}\big)^{-1}$, $v'\notin\lc\rbgw\rc$;}\\
    uv, & \hbox{otherwise.}
  \end{array}
\right.
\end{equation}
\noindent
{\bf Case 2.}
If $\bre(u)+\bre(v)=k+\ell>2$, write $u=u_{1}\cdots u_{k}$ and $v=v_{1}\cdots v_{\ell}$ in the standard factorizations in  ~\meqref{eq:frbg}. Define
\begin{equation}
\mlabel{eq:stdia}
u\diamond v:=\left\{
  \begin{array}{ll}
    v, & \hbox{if ~$k=0$, that is, $u=1$;} \\
    u, & \hbox{if ~$\ell=0$, that is, $v=1$;} \\
     u_{1} \cdots u_{k-1} (u_{k}\diamond v_{1}) v_{2}\cdots v_{\ell}, & \hbox{otherwise.}
  \end{array}
\right.
\end{equation}
where $u_{k}\diamond v_{1}$ is defined by Case 1.

\begin{ex}
Let $u=\lc x \lc y\rc \rc^{(2)}, v=\lc z\rc^{-1}$ and $w=\lc z\rc^{(3)}$ with $x,y,z\in X$. Then
$$u\diamond v= \lc x \lc y\rc \rc^{(2)}\lc z\rc^{-1},~v\diamond w=\lc z\rc^{-1}\lc z\rc^{(3)},~ u\diamond w= \lc x \lc y\rc\diamond \lc z\rc \rc^{(4)}=\lc x \lc y\lc z\rc \rc \rc^{(4)}.$$
\end{ex}

Now we define an operator
$$\A_X:\rbgw\rightarrow \rbgw, \quad w\mapsto \A_X(w)$$
as follows.
For $w\in \rbgw\subseteq \calg(X)$, we first write $$w=w_{1}\cdots w_{k}\,\text{ with }\, w_1, \cdots, w_k\in \rbgw$$
in the standard factorization in  ~\meqref{eq:frbg}. Employing the induction on $\degp(w)\geq 0$, we then define
{\small \begin{equation}\label{eq:daveo}
\A_X(w):=\left\{
            \begin{array}{ll}
              \lc w\rc, & \hbox{if $k=0,1$, or $k\geq 2$ and $w_1, w_k\notin \lc\rbgw\rc$ ;} \\
\Big\lc w_1'\diamond \A_X(w_2\cdots w_k)\Big\rc^{(t)} , & \hbox{if $k\geq2$ with $w=\lc w_1'\rc^{(t)}w_2\cdots w_k$, $w_1'\notin\lc\rbgw\rc$;}\\
\Big\lc w_1\cdots w_{k-1}\A_X(w_k')\Big\rc^{(s)} ,& \hbox{if $k\geq2$ with $w=w_1\cdots w_{k-1}\lc w_k'\rc^{(s)}$, $w_1,w_k'\notin \lc\rbgw\rc$ .}
            \end{array}
          \right.
\end{equation}}
Here $s,t\geq 1$. According to whether $w_k$ is in $\lc \rbgw\rc$, the above second case can be divided  into two subcases.
So equivalently,
{\small \begin{equation*}\label{eq:daveo1}
\A_X(w):=\left\{
  \begin{array}{ll}
    \lc w \rc , & \hbox{if $k=0,1$, or $k\geq 2$ and $w_1, w_k\notin \lc\rbgw\rc$ ;}  \\
   \Big \lc w_1'\diamond \lc w_2\cdots w_k\rc\Big\rc^{(t)}, & \hbox{if $k\geq 2$ and $w_1=\lc w_1' \rc^{(t)}, w_1', w_k \notin \lc\rbgw\rc$;} \\
     \Big\lc w_1'\diamond \lc w_2\cdots \lc w_k'\rc\rc\Big\rc^{(s+t-1)}, & \hbox{if $k\geq 2$ and $w_1=\lc w_1' \rc^{(t)}, w_k =\lc w_k' \rc^{(s)}$, $w_1', w_k'\notin\lc\rbgw\rc$;} \\
 \Big\lc w_1\cdots w_{k-1} \A_X(w_k')\Big\rc^{(s)}, & \hbox{if $k\geq 2$ and  $w_k=\lc w_k'\rc^{(s)}$, $w_1, w_k' \notin \lc\rbgw\rc$.}
   \end{array}
\right.
\end{equation*}}
Here notice that in the second and third two cases, $w_1=\lc w_1' \rc^{(t) }\in \lc \rbgw\rc$ implies that $w_2\notin \lc \rbgw\rc$ by $w=w_1\cdots w_k\in \rbgw$ has no subwords of the form $\lc u\rc \lc v\rc$. So respectively,
$$\A_X(w_2\cdots w_k) = \lc w_2\cdots w_k\rc \in \rbgw,\quad \A_X(w_2\cdots w_k) = \lc w_2\cdots \lc w_k'\rc\rc \in \rbgw.$$

\begin{ex}
Let $u=\lc x \lc y\rc \rc^{(2)}, v=\lc z\rc^{-1}$ and $w=\lc x\rc^{(3)}y\lc z\rc^{(2)}$ with $x,y,z\in X$. Then
$$\A_X(u)=\lc x \lc y\rc \rc^{(3)}, \quad\A_X(v)=\lc \lc z\rc^{-1} \rc, \quad \A_X(w)=\Big\lc x  \big\lc y\lc z\rc\big\rc\Big\rc^{(4)}.$$
\end{ex}

We expose the following result as a preparation.

\begin{lem}\mlabel{clm:dep}
 Define
$$\rbgw_1:=\{w\in \rbgw\mid \bre(w)=1\}.$$ Then
\begin{equation}\mlabel{eq:dias1}
(u \diamond v)\diamond w= u \diamond (v \diamond w)\tforall u,v,w\in \rbgw_1.
\end{equation}
\end{lem}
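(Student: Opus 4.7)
The plan is to proceed by strong induction on $N := \dep(u) + \dep(v) + \dep(w)$. For the base case $N = 0$, all three elements lie in $X^{\pm 1}$ and $\diamond$ coincides with multiplication in the free group $G(X)$, so associativity is immediate. For the inductive step, I classify each element of $\rbgw_1$ by its outer shape: a letter in $X^{\pm 1}$ (type $X$), a positive iterate in $\lc \rbgw\rc$ (type $P$), or a negative iterate in $\lc \rbgw\rc^{-1}$ (type $N$). A glance at \eqref{eq:bre1} shows that for $a, b \in \rbgw_1$ the product $a \diamond b$ is again of breadth one exactly when $a$ and $b$ share a common nontrivial polarity (both type $P$ via Case A, or both type $N$ via Case B), and is otherwise the length-two concatenation $ab$ in $\calg(X)$.

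If at least one of $u \diamond v$ or $v \diamond w$ is such a concatenation, then \eqref{eq:stdia} applies on each side, rewriting both $(u \diamond v)\diamond w$ and $u\diamond (v \diamond w)$ as a single Case 1 product of adjacent breadth-one elements flanked by the remaining letters. A short case split on the polarities involved then reduces the equality to associativity of concatenation in the ambient free operated group $\calg(X)$, and no induction is needed.

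The main case is when $u\diamond v$ and $v\diamond w$ both arise from Case A; the Case B variant is symmetric. Writing $u = \lc u'\rc^{(s)}$, $v = \lc v'\rc^{(t)}$, $w = \lc w'\rc^{(r)}$ with $u', v', w' \notin \lc \rbgw\rc$ and $s, t, r \geq 1$, two applications of Case A yield
$$(u \diamond v)\diamond w = \lc (u' \diamond \lc v'\rc)\diamond \lc w'\rc\rc^{(s+t+r-2)}, \qquad u \diamond (v\diamond w) = \lc u'\diamond \lc v' \diamond \lc w'\rc\rc\rc^{(s+t+r-2)}.$$
Since $\lc v'\rc \diamond \lc w'\rc = \lc v'\diamond \lc w'\rc\rc$ (another instance of Case A), this reduces the lemma to the associativity of the triple $(u', \lc v'\rc, \lc w'\rc)$ under $\diamond$. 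If $u' \in \rbgw_1$, it is an instance of the lemma at strictly smaller total depth (since $s, t, r \geq 1$), and strong induction closes the case. Otherwise $u'$ has breadth $\geq 2$ with standard factorization $u' = u'_1 \cdots u'_k$, and \eqref{eq:stdia} applied to both sides peels off the common prefix $u'_1 \cdots u'_{k-1}$, leaving the analogous identity for $(u'_k, \lc v'\rc, \lc w'\rc) \in \rbgw_1^3$, still at strictly smaller total depth, where the inductive hypothesis again finishes the argument.

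The hard part will be the careful bookkeeping in the main case. Case A is stated with the inner element not lying in $\lc \rbgw\rc$, so in degenerate situations (for instance $u' = 1$, which forces $u' \diamond \lc v'\rc = \lc v'\rc \in \lc \rbgw\rc$) one must re-canonicalize the outer exponent before reapplying Case A. Likewise, the peel-off step must be checked separately in the cases where $u'_k$ is of type $P$ and where $u'_k$ is not of type $P$, to confirm that both sides reduce to the same Case 1 product after the common prefix $u'_1 \cdots u'_{k-1}$ is stripped away.
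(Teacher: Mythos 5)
Your proposal is correct and is essentially the paper's own proof: the same classification of $u,v,w$ by outer type, the same key computation comparing $\lc (u'\diamond\lc v'\rc)\diamond\lc w'\rc\rc^{(s+t+r-2)}$ with $\lc u'\diamond\lc v'\diamond\lc w'\rc\rc\rc^{(s+t+r-2)}$ and peeling the prefix of $u'$ via \eqref{eq:stdia} before invoking the inductive hypothesis, the all-inverse case handled by symmetry through the second line of \eqref{eq:bre1}, and the mixed-polarity cases reduced to concatenation in $\calg(X)$ with no induction. The differences are cosmetic: you induct on the total depth $\dep(u)+\dep(v)+\dep(w)$ instead of the paper's lexicographic induction on $(\dep(u),\dep(v),\dep(w))$ (both orders strictly decrease in the reduction to $(u'_k,\lc v'\rc,\lc w'\rc)$), and you explicitly flag the re-canonicalization of the outer exponent in degenerate situations such as $u'=1$, a point the paper's computation passes over silently.
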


\begin{proof}
The result can be accomplished by an induction on $(\dep(u) , \dep(v) , \dep(w))\geq (0,0,0)$ lexicographically.
If $(\dep(u), \dep(v), \dep(w)) = (0, 0, 0)$, then $u, v,w$ are in the free group $\calg_0=G(X)$ and  ~\meqref{eq:dias1} holds by the associativity of the free group $G(X)$.

For the inductive step, let $p, q, r \geq 0$ be given such that $(p, q, r) > (0, 0, 0)$.
Assume that  ~\eqref{eq:dias1} has been proved for any $u,v,w\in\rbgw_1$ with $(\dep(u), \dep(v), \dep(w)) < (p, q, r)$.
We next show that   ~\meqref{eq:dias1} holds, for $u,v,w\in\rbgw_1$ with $(\dep(u), \dep(v), \dep(w)) = (p, q, r)$.
There are five cases to consider:
$$\left\{
  \begin{array}{ll}
& \hbox{$u,v,w\in\lc \rbgw \rc$;} \\
& \hbox{$u,v,w\in\lc \rbgw \rc^{-1}$;} \\
& \hbox{$u,v\in\lc \rbgw \rc$, $w\in\lc \rbgw \rc^{-1}$;} \\
 & \hbox{$u\in\lc \rbgw \rc$, $v,w\in\lc \rbgw \rc^{-1}$;} \\
 & \hbox{all other cases.}
\end{array}
\right.$$

\noindent{\bf Case 1.} Suppose $u,v,w\in\lc \rbgw \rc$ with $u=\lc u' \rc^{(s)}, v=\lc v' \rc^{(t)}$ and $w=\lc w' \rc^{(r)}$.  By  ~\meqref{eq:bre1} and  the induction hypothesis,
\begin{eqnarray*}
(u \diamond v)\diamond w
&=& \Big\lc u'\diamond \lc v' \rc\Big\rc^{(s+t-1)} \diamond\lc w' \rc^{(r)}\\
&=& \Big\lc \big(u' \diamond\lc v' \rc\big) \diamond \lc w'\rc \Big\rc^{^{(s+t+r-2)}}\\
&=&\left\{
   \begin{array}{ll}
    \Big\lc\big( u'_1 \cdots u'_{k-1}\lc u_k'\rc \diamond\lc v' \rc \big)\diamond \lc w'\rc \Big\rc^{^{(s+t+r-2)}}, & \hbox{if $u'=u'_1 \cdots u'_{k-1}\lc u_k'\rc$;} \\
    \Big\lc u'_1\cdots  u'_k \lc v' \rc \diamond \lc w'\rc \Big\rc^{^{(s+t+r-2)}} , & \hbox{if $u'=u'_1\cdots  u'_k$, $u'_k\notin \lc \rbgw\rc$ .}
   \end{array}
 \right.\\
&=&\left\{
   \begin{array}{ll}
    \Big\lc u'_1 \cdots u'_{k-1}\lc u_k' \rc \diamond \big(\lc v'\rc \diamond \lc w'\rc \big) \Big\rc^{^{(s+t+r-2)}}, & \hbox{if $u'=u'_1 \cdots u'_{k-1}\lc u_k'\rc$;} \\
    \Big\lc u'_1\cdots  u'_k \lc v'  \diamond \lc w'\rc\rc \Big\rc^{^{(s+t+r-2)}} , & \hbox{if $u'=u'_1\cdots  u'_k$, $u'_k\notin \lc \rbgw\rc$ .}
   \end{array}
 \right.\\
&=&\left\{
   \begin{array}{ll}
    \Big\lc u'_1 \cdots u'_{k-1}\big\lc u_k' \diamond\lc v' \diamond \lc w'\rc\rc\big\rc  \Big\rc^{^{(s+t+r-2)}}, & \hbox{if $u'=u'_1 \cdots 'u_{k-1}\lc u_k'\rc$;} \\
    \Big\lc u'_1\cdots  u'_k \lc v'  \diamond \lc w'\rc\rc \Big\rc^{^{(s+t+r-2)}} , & \hbox{if $u'=u'_1\cdots  u'_k$, $u'_k\notin \lc \rbgw\rc$ .}
   \end{array}
 \right.\\
&=&\Big\lc u' \diamond\lc v'  \diamond \lc w'\rc\rc \Big\rc^{^{(s+t+r-2)}}\\
&=&\lc u'\rc^{(s)}\diamond \Big\lc v' \diamond\lc w' \rc\Big\rc^{(t+r-1)}\\
&=& u \diamond (v \diamond w).
\end{eqnarray*}

\noindent{\bf Case 2.} Suppose $u,v,w\in\lc \rbgw \rc^{-1}$ with $u=(\lc u' \rc^{(s)})^{-1}, v=(\lc v' \rc^{(t)})^{-1}$ and $w=(\lc w' \rc^{(r)})^{-1}$.
By  ~\meqref{eq:bre1} and  Case $1$
\begin{eqnarray*}
(u \diamond v)\diamond w
&=&\Big( \lc v' \rc^{(t)}\diamond\lc u' \rc^{(s)}\Big)^{-1} \diamond\Big(\lc w' \rc^{(r)}\Big)^{-1}
= \Big(\lc w' \rc^{(r)}\diamond \Big(  \lc v' \rc^{(t)}\diamond\lc u' \rc^{(s)}\Big)\Big)^{-1}\\
&=&\Big(\Big( \lc w' \rc^{(r)}\diamond  \lc v' \rc^{(t)}\Big)\diamond\lc u' \rc^{(s)}\Big)^{-1}
=\Big(\lc u'\rc^{(s)} \Big)^{-1}\diamond\Big(\lc w' \rc^{(r)}\diamond\lc v' \rc^{(t)}\Big)^{-1}
=u \diamond (v \diamond w).
\end{eqnarray*}

\noindent{\bf Case 3.} Suppose $u,v\in\lc \rbgw \rc $ and $w\in \lc \rbgw \rc^{-1}$ with $u=\lc u' \rc^{(s)}, v=\lc v' \rc^{(t)}$ and $w=(\lc w' \rc^{(r)})^{-1}$.  By~\meqref{eq:bre1} and~\meqref{eq:stdia},
\begin{eqnarray*}
(u \diamond v)\diamond w
= \Big( \lc u' \rc^{(s)}\diamond\lc v' \rc^{(t)} \Big) \Big(\lc w' \rc^{(r)}\Big)^{-1}
=\lc u' \rc^{(s)}\diamond\Big(\lc v' \rc^{(t)}\Big(\lc w' \rc^{(r)}\Big)^{-1}\Big)
=u \diamond (v \diamond w).
\end{eqnarray*}

\noindent{\bf Case 4.}  Suppose $u\in\lc \rbgw \rc$ and $v,w\in \lc \rbgw \rc^{-1}$ with $u=\lc u' \rc^{(s)}, v=(\lc v' \rc^{(t)})^{-1}$ and $w=(\lc w' \rc^{(r)})^{-1}$. It follows from~\meqref{eq:bre1} and~\meqref{eq:stdia} that
\begin{eqnarray*}
(u \diamond v)\diamond w
&=&\Big(\lc u' \rc^{(s)}(\lc v' \rc^{(t)})^{-1}\Big)\diamond \Big(\lc w' \rc^{(r)}\Big)^{-1}
=\lc u' \rc^{(s)} \bigg( \Big(\lc v' \rc^{(t)}\Big)^{-1}\diamond \Big(\lc w' \rc^{(r)}\Big)^{-1} \bigg)
=u \diamond (v \diamond w).
\end{eqnarray*}

\noindent{\bf Case 5.} For  $u,v,w\in\rbgw_1$ with all other cases, by  ~\meqref{eq:stdia}, we have
$$(u \diamond v)\diamond w=uvw= u \diamond (v \diamond w).$$
Therefore  ~\meqref{eq:dias1} holds.
\end{proof}

Before we proceed to the free \avegs, we prove the following result.

\begin{thm}
With the above definitions of the multiplication $\diamond$  and the operator $\A_X$,
\begin{enumerate}
  \item the pair $(\rbgw,\diamond)$ is a group; \mlabel{item:gp}
  \item the triple $(\rbgw,\diamond,\A_X)$ is an \aveg.\mlabel{item:avegp}
\end{enumerate}
\mlabel{thm:avegp}
\end{thm}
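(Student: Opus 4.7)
The plan is to view $\rbgw$ as a canonical system of normal-form representatives for the quotient group $\calg(X)/N_X$, where $N_X$ is the normal operated subgroup of $\calg(X)$ generated by the averaging relations (\mref{eq:avegp}). The three forbidden subwords $\lc u\rc\lc v\rc$, $\lc\lc u\rc v\rc$, and $\lc u\lc v\rc^{(2)}\rc$ encode termination conditions for a rewriting system coming from (\mref{eq:avegp}), and the formulas (\mref{eq:bre1})--(\mref{eq:stdia}) and (\mref{eq:daveo}) are precisely the induced product and operator on the normal forms. Once these are shown to respect the group and averaging axioms, parts \mref{item:gp} and \mref{item:avegp} will follow.

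For \mref{item:gp} I would proceed in four steps. \emph{Closure:} an induction on $(\dep(u),\dep(v))$ shows that every branch of (\mref{eq:bre1}) and (\mref{eq:stdia}) lands in $\rbgw$; for example, in the branch producing $\lc u'\diamond\lc v'\rc\rc^{(s+t-1)}$, the hypothesis $u'\notin\lc\rbgw\rc$ forbids any new $\lc\lc\cdot\rc\cdot\rc$ at the outermost level, while the induction rules out any $\lc\cdot\lc\cdot\rc^{(2)}\rc$ inside $u'\diamond\lc v'\rc$. \emph{Associativity:} Lemma~\mref{clm:dep} supplies the breadth-one case; for larger breadth, (\mref{eq:stdia}) presents $\diamond$ as concatenation with local reductions only at the two standard-factorisation boundaries, so for $\ell\geq 2$ the two sides of the associative law coincide after one unfolding of (\mref{eq:stdia}), while for $\ell=1$ the identity reduces to the triple breadth-one product $u_k\diamond v_1\diamond w_1$ handled by the lemma. \emph{Identity:} the empty word $1\in\calg_0\subseteq\rbgw$ is a two-sided unit by the first two clauses of (\mref{eq:stdia}). \emph{Inverses:} the involution $(\cdot)^{-1}$ of $\calg(X)$ restricts to $\rbgw$, since reversing a standard factorisation and inverting its indecomposable letters preserves the list of forbidden patterns; a short induction on depth and breadth using (\mref{eq:bre1}) then yields $u\diamond u^{-1}=1$.

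For \mref{item:avegp} I would verify the averaging identity
$$\A_X(u)\diamond\A_X(v)=\A_X\bigl(\A_X(u)\diamond v\bigr)=\A_X\bigl(u\diamond\A_X(v)\bigr)$$
by a case analysis on the standard factorisations of $u$ and $v$ across the four branches of (\mref{eq:daveo}). In each case, all three expressions unwind through (\mref{eq:daveo}) and (\mref{eq:bre1})--(\mref{eq:stdia}) to a common normal form of the shape $\lc u'\diamond\lc v'\rc\rc^{(s+t-1)}$, which is the group-word counterpart of the algebraic identity $\A^s(u)\A^t(v)=\A^{s+t-1}(u\A(v))$ displayed just before the definition of averaging group word. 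The definitions of $\A_X$ and $\diamond$ were designed so that these three normal forms agree on the nose, making the verification, though long, essentially mechanical.

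\textbf{Expected main obstacle.} The genuine difficulty is bookkeeping: the four-branch operator $\A_X$ combined with the three-branch breadth-one product (\mref{eq:bre1}) produces a sizeable case tree both for closure in \mref{item:gp} and for the averaging identity in \mref{item:avegp}. I expect the written-out proof to be dominated by a disciplined induction on depth, with the breadth-one base case delegated to Lemma~\mref{clm:dep} and its straightforward extension to triple products, so that only the interaction at the boundaries of standard factorisations actually requires argument.
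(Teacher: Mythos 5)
Your plan is essentially the paper's own proof: associativity is reduced to the breadth-one case of Lemma~\mref{clm:dep} and then extended by unfolding~\meqref{eq:stdia} along standard factorizations, while the averaging identity~\meqref{eq:fave} is checked by a case analysis over the branches of~\meqref{eq:daveo} inside a nested induction, with the quotient/normal-form picture playing only the motivational role it also plays in the paper (no confluence argument is ever made). The one point where your write-up would need recalibrating is the induction measure in part~\mref{item:avegp}: the paper inducts on the operator degree $\degp(u)+\degp(v)$ and then on breadth, because in the key rearrangements (its Subcases 2.2--3.3) the expressions do not ``agree on the nose'' but require applying the identity to instances of strictly smaller degree whose depth need not decrease, so an induction on depth alone would not close those cases.
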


\begin{proof}
\meqref{item:gp} Notice that the identity element $1$ and $w^{-1}$ are averaging group words, for any $w\in \rbgw$.
It suffices to check that $\diamond$ is associative:
\begin{equation}\mlabel{eq:dias}
(u \diamond v)\diamond w= u \diamond (v \diamond w)\tforall u,v,w\in\rbgw.
\end{equation}
Notice that  ~\meqref{eq:dias} holds if $u=1$, $v=1$ or $w=1$. Suppose
$u,v,w\neq 1$ and use induction on $\bre(u)+ \bre(v) + \bre(w))\geq 3$.

For the initial step of $\bre(u)+ \bre(v) + \bre(w))=3$, we have
$\bre(u)=\bre(v)=\bre(w)=1$, and so  ~\meqref{eq:dias} follows from Lemma~\mref{clm:dep}.

For the inductive step, let $k\geq 3$ be given.
Assume that  ~\eqref{eq:dias} has been proved for any $u,v,w\in\rbgw\setminus\{1\}$ with $\bre(u) + \bre(v) + \bre(w)\leq k$.
We now consider the case when $\bre(u) + \bre(v) + \bre(w)=k+1$.
Then $k \geq 4$ and so at least one of $\bre(u)$, $\bre(v)$ or $\bre(w)$ is
greater than or equal two. We accordingly have the following three cases to verify.

\noindent{\bf Case 1.} Suppose $\bre(u)\geq 2$. Then we may write
$u=u_{1}\cdots u_{m}$ in \meqref{eq:frbg} with $m\geq 2$. By~\meqref{eq:stdia} and the induction hypothesis,
\begin{align*}
(u\diamond v)\diamond w=&\ \big((u_{1}\cdots u_{m}) \diamond v\big) \diamond w\\
=&\ \big((u_{1}\cdots u_{m-1})(u_{m}\diamond v)\big)\diamond w   \\
=&\ (u_{1}\cdots u_{m-1})\big((u_{m}\diamond v)\diamond w\big) \\
=&\ (u_{1}\cdots u_{m-1})\big(u_{m}\diamond (v\diamond w)\big) \\
=&\ (u_{1}\cdots u_{m-1}u_{m})\diamond (v \diamond w) \\
=&\ u\diamond (v\diamond w).
\end{align*}

\noindent{\bf Case 2.}  Suppose $\bre(w)\geq 2$. Then the verification is similar to the previous case.

\noindent{\bf Case 3.} Suppose $\bre(v)\geq 2$. Write $v=v_{1}\cdots v_{n}$ in \meqref{eq:frbg} with $n\geq 2$. Repeatedly applying  ~\meqref{eq:stdia},
\begin{align*}
(u\diamond v)\diamond w=&\ \big(u\diamond (v_{1}\cdots v_{n})\big)\diamond w\\
=&\ \big( (u\diamond v_{1}) v_{2}\cdots v_{n}\big)\diamond w\\
=&\ (u\diamond v_{1})(v_{2}\cdots v_{n-1})(v_{n}\diamond w)\\
=&\ u\diamond \big( (v_{1}\cdots v_{n-1})(v_{n}\diamond w) \big) \\
=&\ u\diamond \big((v_{1}\cdots v_{n})\diamond w\big)\\
=&\ u\diamond (v\diamond w).
\end{align*}

Now we have completed the inductive proof of~\meqref{eq:dias}.

\noindent\meqref{item:avegp}
We only need to verify the equations
\begin{equation}\mlabel{eq:fave}
\A_X(u)\diamond \A_X(v)=\A_X(u\diamond \A_X(v)),\quad \A_X(u)\diamond \A_X(v)=\A_X(\A_X(u)\diamond v)\tforall u,v\in \rbgw,
\end{equation}
which will be achieved by an induction on the sum of degrees $\degp(u)+ \degp(v) \geq 0$.
If $\degp(u)+\degp(v) = 0$, then
\begin{align*}
 \A_X(u) \diamond \A_X(v) \overset{\eqref{eq:daveo}}{=}& \lc u\rc \diamond \lc v\rc
 \overset{\eqref{eq:bre1}}{=}  \lc u\lc v\rc\rc = \A_X(u\diamond \A_X(v)),\\
 \A_X(\A_X(u) \diamond v )\overset{\eqref{eq:daveo}}{=}&  \A_X(\lc u\rc v) \overset{\eqref{eq:daveo}}{=}\lc u\diamond\A_X( v)\rc =\lc u\lc v\rc\rc\overset{\eqref{eq:daveo}}{=}  \A_X(u) \diamond \A_X(v).
\end{align*}
For the inductive step, let $\ell\geq 0$ be given.
Assume that  ~\eqref{eq:fave} has been proved for any $u,v\in\rbgw$ with $\degp(u) + \degp(v) \leq \ell$.
We now consider the case when $\degp(u) + \degp(v) =\ell+1$, which will be proved by an induction on the sum of breadths $\bre(u)+ \bre(v) \geq 2$ with $u,v\neq1$, since the result holds for $u=1$ or $v=1$.
Indeed, if $u=1$ or $v=1$, without loss of generality, let $u=1$. Then $\A_X(v) =  \lc v'\rc $ for some $v'\in \A(X)$ by~\meqref{eq:daveo} and so
\begin{align*}
 \A_X(1) \diamond \A_X(v) \overset{\eqref{eq:daveo}}{=}& \lc 1\rc \diamond \lc v'\rc
 \overset{\eqref{eq:bre1}}{=}  \lc\lc v'\rc\rc = \lc \A_X(v)\rc \overset{\eqref{eq:daveo}}{=} \A_X(\A_X(v)) = \A_X(1\diamond \A_X(v)),\\
 \A_X(1) \diamond \A_X(v) \overset{\eqref{eq:daveo}}{=}&  \lc\lc v'\rc\rc =\lc1\diamond\A_X( v)\rc \overset{\eqref{eq:daveo}}{=}\A_X (\lc 1\rc \diamond v)\overset{\eqref{eq:daveo}}{=} \A_X( \A_X(1) \diamond v).
\end{align*}

\noindent{\textbf{I. The initial step on $\bre(u)+ \bre(v)$.}}
For the initial step $\bre(u)+ \bre(v)=2$, we have $\bre(u)= \bre(v)=1.$
According to whether $u$ or $v$ is in $\lc \rbgw \rc$,
we have the following four cases to consider.

\noindent{\bf Case 1.} $u,v\notin \lc \rbgw \rc$.
Then
\begin{align*}
\A_X(u)\diamond \A_X(v)\overset{\eqref{eq:daveo}}{=}\lc u \rc \diamond \lc v\rc
 \overset{\eqref{eq:bre1}}{=} \lc u \diamond \lc v\rc\rc= \lc u  \lc v\rc\rc
 \overset{\eqref{eq:daveo}}{=} \A_X(u\lc v\rc)
=\A_X(u\diamond \lc v\rc)
=\A_X(u\diamond \A_X(v))
\end{align*}
and
\begin{align*}
\A_X(u)\diamond \A_X(v)= \lc u \diamond \lc v\rc\rc  =\lc u \diamond \A_X( v)\rc
 \overset{\eqref{eq:daveo}}{=} \A_X( \lc u\rc   v)
 \overset{\eqref{eq:bre1}}{=} \A_X( \lc u\rc \diamond  v)
\overset{\eqref{eq:daveo}}{=} \A_X(\A_X(u)\diamond v).
\end{align*}

\noindent{\bf Case 2.} $u\in \lc \rbgw \rc$ and $v\notin \lc \rbgw \rc$.
Write $u=\lc u '\rc^{(s)}$ with $u' \notin \lc\rbgw\rc$. Then
\begin{eqnarray*}
\A_X(u)\diamond \A_X(v)&=&\lc u' \rc^{(s+1)} \diamond \lc v\rc =\Big\lc u' \diamond \lc v\rc\Big\rc^{(s+1)}  \overset{\eqref{eq:daveo}}{=}\A_X\Big(\Big\lc u' \diamond \lc v\rc\Big\rc^{(s)}\Big)\\
&\overset{\eqref{eq:bre1}}{=}& \A_X\Big( \lc u'\rc^{(s)} \diamond \lc v\rc\Big)=\A_X(u\diamond \A_X(v))
\end{eqnarray*}
and
\begin{eqnarray*}
\A_X(u)\diamond \A_X(v)&=&\Big\lc u' \diamond \lc v\rc\Big\rc^{(s+1)} = \Big\lc u' \diamond \A(v)\Big\rc^{(s+1)}    \overset{\eqref{eq:daveo}}{=}\A_X\Big( \lc u'\rc^{(s+1)}   v\Big)\\
&=&\A_X\Big( \lc u'\rc^{(s+1)} \diamond  v\Big)=\A_X(\A_X(u)\diamond v).
\end{eqnarray*}

\noindent{\bf Case 3.} $u\notin \lc \rbgw \rc$ and $v\in \lc \rbgw \rc$.
The verification of this case is similar to the previous case.

\noindent{\bf Case 4.} $u,v\in \lc \rbgw \rc$.
Suppose $u=\lc u '\rc^{(s)}, v=\lc v '\rc^{(t)}\in \lc \rbgw \rc$ with $u', v' \notin \lc\rbgw\rc$. Then
\begin{eqnarray*}
\A_X(u)\diamond \A_X(v)&=&\lc u' \rc^{(s+1)} \diamond \lc v '\rc^{(t+1)} \overset{\eqref{eq:bre1}}{=}\Big\lc u' \diamond \lc v'\rc\Big\rc^{(s+t+1)}  \overset{\eqref{eq:daveo}}{=}\A_X\Big(\Big\lc u' \diamond \lc v'\rc\Big\rc^{(s+t)}\Big)\\
&\overset{\eqref{eq:bre1}}{=}&\A_X\Big( \lc u'\rc^{(s)} \diamond \lc v'\rc^{(t+1)}\Big)=\A_X(u\diamond \A_X(v))
\end{eqnarray*}
and
\begin{eqnarray*}
\A_X(u)\diamond \A_X(v)=\A_X\Big(\Big\lc u' \diamond \lc v'\rc\Big\rc^{(s+t)}\Big)
\overset{\eqref{eq:bre1}}{=}\A_X\Big( \lc u'\rc^{(s+1)} \diamond \lc v'\rc^{(t)}\Big)=\A_X(\A_X(u)\diamond v).
\end{eqnarray*}

\noindent{\textbf{II. The inductive step on $\bre(u)+ \bre(v)$.}}
For the inductive step on $\bre(u) + \bre(v)\geq 3 $, let $k\geq 2$ be fixed.
Assume that  ~\eqref{eq:fave} has been showed for any $u,v\in\rbgw\setminus\{1\}$ with $\bre(u) + \bre(v)\leq k$.
We now consider the case when $\bre(u) + \bre(v)=k+1$.
Then at least one of $\bre(u)=:m$ or $\bre(v)=:n$ is greater than one.
We write $u=u_1\cdots u_m$ and $v=v_1\cdots v_n$ in \meqref{eq:frbg}.

We just to prove the first equation in~\meqref{eq:fave}, as the proof of the second equation in~\meqref{eq:fave} is similar to the former.
By~\meqref{eq:daveo}, there are three cases to consider for $u=u_1\cdots u_m$.

\noindent{\bf Case 1.} Suppose $m=0,1$, or $m>1$ and $u=u_1\cdots u_m$ with $u_1, u_m\notin \lc\rbgw\rc$. Then $\A_X(u)=\lc u\rc $. According to~\meqref{eq:daveo}, there are three subcases for $v=v_1\cdots v_n$.

\noindent{\bf Subcase 1.1.} Suppose $n=0,1$, or $n>1$ and $v=v_1\cdots v_n$ with $v_1, v_n\notin \lc\rbgw\rc$. Then
$$\A_X(u)\diamond \A_X(v)\overset{\eqref{eq:daveo}}{=}   \lc u\rc \diamond \lc v\rc\overset{\eqref{eq:bre1}}{=}\lc u \diamond \lc v\rc\rc\overset{\eqref{eq:bre1}, \eqref{eq:stdia}}{=}\lc u  \lc v\rc\rc
\overset{\eqref{eq:daveo}}{=}\A_X( u  \lc v\rc)\overset{\eqref{eq:daveo}}{=}\A_X(u\A_X(v))\overset{\eqref{eq:bre1}, \eqref{eq:stdia}}{=}\A_X(u\diamond \A_X(v)).$$

\noindent{\bf Subcase 1.2.} Suppose $n\geq2$ with $v=\lc v_1'\rc^{(p)}v_2\cdots v_n$ and $v_1' \notin \lc\rbgw\rc$. Then
\begin{eqnarray*}
\A_X(u)\diamond \A_X(v)&=& \lc u \rc \diamond \Big\lc v_1'\diamond \A_X(v_2\cdots v_n)\Big\rc^{(p)} \hspace{1cm}(\text{by  ~\meqref{eq:daveo}})\\
&=&\Big\lc u \diamond\lc v_1'\diamond \A_X(v_2\cdots v_n)\rc\Big\rc^{(p)}\hspace{1cm}(\text{by~\meqref{eq:bre1}})\\
&=&\Big\lc u \lc v_1'\diamond \A_X(v_2\cdots v_n)\rc\Big\rc^{(p)} \hspace{1cm}(\text{by $u_m\notin \lc \A(X)\rc$}) \\
&=&\Big\lc u \A_X\Big( v_1'\diamond \A_X(v_2\cdots v_n)\Big)\Big\rc^{(p)} \hspace{1cm}(\text{by  ~\meqref{eq:daveo}})\\
&=&\A_X\Big( u\lc v_1'\diamond \A_X(v_2\cdots v_n)\rc^{(p)}\Big)\hspace{1cm}(\text{by ~\eqref{eq:daveo}})\\
&=& \A_X\Big( u\diamond\lc v_1'\diamond \A_X(v_2\cdots v_n)\rc^{(p)}\Big) \hspace{1cm}(\text{by $u_m\notin \lc \A(X)\rc$})\\
&=&\A_X(u\diamond \A_X(v))  \hspace{1cm}(\text{by  ~\meqref{eq:daveo}}).
\end{eqnarray*}

\noindent{\bf Subcase 1.3.} Suppose $n\geq2$ with $v=v_1\cdots v_{n-1}\lc v_n'\rc^{(q)}$ and $v_1, v_n'\notin \lc\rbgw\rc$. We have
\begin{eqnarray*}
\A_X(u)\diamond \A_X(v)&=&\lc u\rc \diamond \Big\lc v_1\cdots v_{n-1} \A_X( v_n')\Big\rc^{(q)} \hspace{1cm}(\text{by  ~\meqref{eq:daveo}})\\
&=&\Big\lc u\diamond \lc v_1\cdots v_{n-1} \A_X( v_n')\rc \Big\rc^{(q)}\hspace{1cm}(\text{by  ~\meqref{eq:bre1}})\\
&=&\Big\lc u\lc v_1\cdots v_{n-1} \A_X( v_n')\rc \Big\rc^{(q)}  \hspace{1cm}(\text{by $u_m\notin \lc \A(X)\rc$})\\
&=&\Big\lc u \A_X (v_1\cdots v_{n-1} \A_X( v_n')) \Big\rc^{(q)} \hspace{1cm}(\text{by  ~\meqref{eq:daveo}}) \\
&=&\A_X\Big( u\lc v_1\cdots v_{n-1} \A_X( v_n')\rc^{(q)} \Big)\hspace{1cm}(\text{by  ~\meqref{eq:daveo}})\\
&=&\A_X\Big( u\diamond\lc v_1\cdots v_{n-1} \A_X( v_n')\rc^{(q)} \Big)  \hspace{1cm}(\text{by $u_m\notin \lc \A(X)\rc$})\\
&=&\A_X(u\diamond \A_X(v)).  \hspace{1cm}(\text{by  ~\meqref{eq:daveo}})
\end{eqnarray*}
\noindent{\bf Case 2.} Suppose $m\geq2$ with $u=\lc u_1'\rc^{(t)}u_2\cdots u_m$, $u_1' \notin \lc\rbgw\rc$. By  ~\meqref{eq:daveo}, there are three subcases for $v=v_1\cdots v_n$.

\noindent{\bf Subcase 2.1.} Suppose $n=0,1$, or $n>1$ and $v=v_1\cdots v_n$ with $v_1, v_n\notin \lc\rbgw\rc$. Then $\A_X(v)=\lc v\rc$ and
\begin{eqnarray*}
\A_X(u)\diamond \A_X(v)&=& \Big\lc u_1'\diamond \A_X(u_2\cdots u_m)\Big\rc^{(t)} \diamond \lc v\rc  \hspace{1cm}(\text{by  ~\meqref{eq:daveo}})\\
&=&\Big\lc u_1'\diamond \A_X(u_2\cdots u_m) \diamond \lc v\rc \Big\rc^{(t)}\hspace{1cm}(\text{by  ~\meqref{eq:bre1}})\\
&=&\Big\lc u_1'\diamond \A_X(u_2\cdots u_m) \diamond \A_X( v) \Big\rc^{(t)}\\
&=&\Big\lc u_1'\diamond \A_X(u_2\cdots u_m \diamond \A_X( v))  \Big\rc^{(t)}\hspace{1cm}(\text{by the induction hypothesis on breadth})\\
&=&\A_X\Big( \lc u_1'\rc^{(t)}u_2\cdots u_m \diamond \A_X( v)  \Big)\hspace{1cm}(\text{by  ~\meqref{eq:daveo}})\\
&=&\A_X(u\diamond \A_X(v)).
\end{eqnarray*}
\noindent{\bf Subcase 2.2.} Suppose $n\geq2$ with $v=\lc v_1'\rc^{(p)}v_2\cdots v_n$ and $v_1' \notin \lc\rbgw\rc$. We have
\begin{eqnarray*}
\A_X(u)\diamond \A_X(v)&=&\Big\lc u_1'\diamond \A_X(u_2\cdots u_{m})\Big\rc^{(t)}\diamond \Big\lc v_1'\diamond \A_X(v_2\cdots v_{n})\Big\rc^{(p)}\hspace{1cm}(\text{by  ~\meqref{eq:daveo}})\\
%
&=&\Big\lc u_1'\diamond \A_X(u_2\cdots u_{m})\diamond\Big\lc v_1'\diamond \A_X(v_2\cdots v_{n})\Big\rc\Big\rc^{(t+p-1)}\hspace{0.7cm}(\text{by~\eqref{eq:bre1} and $u_1'\notin \lc \A(X)\rc$})\\
&=&\Big\lc u_1'\diamond \A_X(u_2\cdots u_{m})\diamond \A_X( \lc v_1'\rc v_2\cdots v_{n})\Big\rc^{(t+p-1)}\hspace{1cm}(\text{by~\eqref{eq:daveo}})\\
&=&  \Big\lc u_1' \diamond\A_X\Big(u_2\cdots u_{m}\diamond\A_X( \lc v_1'\rc v_2\cdots v_{n})\Big)\Big\rc^{(t+p-1)}\\
&&\hspace{2cm}(\text{by the induction hypothesis on degree})\\
&=&\Big\lc\A_X\Big( \lc u_1'\rc^{(t)} u_2\cdots u_{m}\diamond\A_X( \lc v_1'\rc v_2\cdots v_{n})\Big)\Big\rc^{(p-1)}\hspace{1cm}(\text{by  ~\meqref{eq:daveo}})\\
&=&\A_X^{p}\Big(u\diamond \A_X\big( \lc v_1'\rc v_2\cdots v_{n}\big)\Big)\hspace{1cm}(\text{by  ~\meqref{eq:daveo}})\\
&=&\A_X\Big(u\diamond \A_X^{p}\big( \lc v_1'\rc v_2\cdots v_{n}\big)\Big)\hspace{0.7cm}(\text{by the induction hypothesis on degree})\\
&=&\A_X\Big(u\diamond \big\lc v_1' \A_X(v_2\cdots v_{n})\big\rc^{(p)} \Big)\hspace{0.7cm}(\text{by  ~\meqref{eq:daveo}})\\
&=&\A_X\Big(u\diamond \A_X\big( \lc v_1'\rc^{(p)} v_2\cdots v_{n}\big)\Big)\hspace{0.7cm}(\text{by  ~\meqref{eq:daveo}})\\
&=&\A_X(u\diamond \A_X(v)).
\end{eqnarray*}

\noindent{\bf Subcase 2.3.} Suppose $n\geq2$ with $v=v_1\cdots v_{n-1}\lc v_n'\rc^{(q)}$ and $v_1, v_n'\notin \lc\rbgw\rc$. We obtain
\begin{eqnarray*}
\A_X(u)\diamond \A_X(v)&=&\Big\lc u_1'\diamond \A_X(u_2\cdots u_{m})\Big\rc^{(t)} \diamond \Big\lc v_1\cdots v_{n-1} \A_X( v_{n}')\Big\rc^{(q)} \hspace{1cm}(\text{by  ~\meqref{eq:daveo}})\\
&=&\Big\lc u_1'\diamond\A_X(u_2\cdots u_{m})\diamond\Big\lc v_1\cdots v_{n-1} \A_X( v_{n}')\Big\rc\Big\rc^{(t+q-1)}\hspace{0.7cm}(\text{by  ~\meqref{eq:bre1} and $u_1'\notin \lc \A(X)\rc$})\\
&=&\Big\lc u_1'\diamond\A_X(u_2\cdots u_{m})\diamond \A_X\Big( v_1\cdots v_{n-1} \lc v_{n}'\rc\Big)\Big\rc^{(t+q-1)}\hspace{0.7cm}(\text{by  ~\meqref{eq:daveo}})\\
&=&\Big\lc u_1'\diamond\A_X\Big(u_2\cdots u_{m} \diamond \A_X\big( v_1\cdots v_{n-1} \lc v_{n}'\rc\big)\Big)\Big\rc^{(t+q-1)}\\
&&\hspace{2cm}(\text{by the induction hypothesis on breadth})\\
&=&\Big\lc \A_X\Big(\lc u_1' \rc^{(t)}u_2\cdots u_{m} \diamond \A_X\big( v_1\cdots v_{n-1} \lc v_{n}'\rc\big)\Big)\Big\rc^{(q-1)}\hspace{0.7cm}(\text{by  ~\meqref{eq:daveo}})\\
&=& \A_X^{q}\Big(u \diamond \A_X\big( v_1\cdots v_{n-1} \lc v_{n}'\rc\big)\Big)\hspace{2cm}(\text{by  ~\meqref{eq:daveo}})\\
&=&  \A_X\Big(u \diamond \A_X^{q}\big( v_1\cdots v_{n-1} \lc v_{n}'\rc\big)\Big)\hspace{0.7cm}(\text{by the induction hypothesis on degree})\\
&=&\A_X\Big(u\diamond \Big\lc v_1\cdots v_{n-1}\A_X( v_{n}')\Big\rc^{(q)}\Big)\hspace{2cm}(\text{by  ~\meqref{eq:daveo}})\\
&=&\A_X(u\diamond \A_X(v_1\cdots v_{n-1}\lc v_{n}'\rc^{(q)}))\hspace{2cm}(\text{by  ~\meqref{eq:daveo}})\\
&=&\A_X(u\diamond \A_X(v)).
\end{eqnarray*}
\noindent{\bf Case 3.} Suppose $m\geq2$ with $u=u_1\cdots u_{m-1}\lc u_m'\rc^{(s)}$, $u_1, u_m'\notin \lc\rbgw\rc$. By  ~\meqref{eq:daveo}, there are three subcases for $v=v_1\cdots v_n$.

\noindent{\bf Subcase 3.1.} Suppose $n=0,1$, or $n>1$ and $v=v_1\cdots v_n$ with $v_1, v_n\notin \lc\rbgw\rc$. Then $\A_X(v)=\lc v\rc$ and
\begin{eqnarray*}
\A_X(u)\diamond \A_X(v)&=&\Big\lc u_1\cdots u_{m-1} \A_X( u_{m}')\Big\rc^{(s)} \diamond \lc v\rc \\
&=&\Big\lc u_1\cdots u_{m-1} \A_X( u_{m}') \diamond \lc v\rc \Big\rc^{(s)}\hspace{1.3cm}(\text{by  ~\meqref{eq:bre1}})\\
&=&\Big\lc u_1\cdots u_{m-1} \A_X( u_{m}') \diamond \A_X( v) \Big\rc^{(s)}\\
&=& \Big\lc u_1 \cdots u_{m-1}\A_X(u_{m}' \diamond \A_X(v))  \Big\rc^{(s)}
\hspace{0.5cm}(\text{by the induction hypothesis on degree})\\
&=&\A_X\Big( u_1 \cdots u_{m-1}\lc u_{m}' \diamond \A_X(v)\rc^{(s)}  \Big)\hspace{0.7cm}(\text{by  ~\meqref{eq:daveo}})\\
&=&\A_X\Big( u_1 \cdots u_{m-1}\A_X^s( u_{m}' )\diamond \A_X(v) \Big)\hspace{0.6cm}(\text{by the induction hypothesis on degree})\\
&=&\A_X\Big( u_1 \cdots u_{m-1}\lc u_{m}' \rc^{(s)}\diamond \A_X(v) \Big)\hspace{0.7cm}(\text{by  ~\meqref{eq:daveo} and $\lc u_m'\rc\in\lc\rbgw\rc$})\\
&=&\A_X(u\diamond \A_X(v)).
\end{eqnarray*}

\noindent{\bf Subcase 3.2.} Suppose $n\geq2$ with $v=\lc v_1'\rc^{(p)}v_2\cdots v_n$ and $v_1' \notin \lc\rbgw\rc$. Thus
\begin{eqnarray*}
\A_X(u)\diamond \A_X(v)&=& \Big\lc u_1\cdots u_{m-1} \A_X( u_{m}')\Big\rc^{(s)}\diamond \Big\lc v_1'\diamond \A_X(v_2\cdots v_{n})\Big\rc^{(p)} \\
&=&\Big\lc u_1\cdots u_{m-1} \A_X( u_{m}')\diamond\Big\lc v_1'\diamond \A_X(v_2\cdots v_{n})\Big\rc\Big\rc^{(s+p-1)}\hspace{1cm}(\text{by  ~\meqref{eq:bre1}})\\
&=&\Big\lc u_1\cdots u_{m-1} \A_X( u_{m}')\diamond\A_X( \lc v_1'\rc v_2\cdots v_{n})\Big\rc^{(s+p-1)}\hspace{1.3cm}(\text{by  ~\meqref{eq:daveo}})\\
&=& \Big\lc u_1\cdots u_{m-1} \A_X\Big(  u_{m}'\diamond \A_X( \lc v_1'\rc v_2\cdots v_{n})\Big)\Big\rc^{(s+p-1)} \\
&&\hspace{2cm}(\text{by the induction hypothesis on degree})\\
&=&\A_X\Big( u_1\cdots u_{m-1} \Big\lc  u_{m}'\diamond \A_X( \lc v_1'\rc v_2\cdots v_{n})\Big\rc^{(s+p-1)}\Big) \hspace{1.3cm}(\text{by  ~\meqref{eq:daveo}})\\
&=&\A_X\Big( u_1\cdots u_{m-1} \A_X^{s+p-1}\Big(  u_{m}'\diamond \A_X(  v_1'\diamond\A_X( v_2\cdots v_{n}))\Big)\Big) \hspace{1.3cm}(\text{by  ~\meqref{eq:daveo}})\\
&=&\A_X\Big( u_1\cdots u_{m-1} \A_X^s( u_{m}' )\diamond\A_X^p \big(v_1'\diamond \A_X(v_2\cdots v_{n})\big)\Big) \\
&&\hspace{2cm}(\text{by the induction hypothesis on degree})\\
&=&\A_X\Big( u \diamond\A_X(\lc v_1'\rc^{(p)}v_2\cdots v_{n})\Big) \hspace{2cm}(\text{by  ~\meqref{eq:daveo} and $\lc u_m'\rc\in\lc\rbgw\rc$})\\
&=&\A_X(u\diamond \A_X(v)).
\end{eqnarray*}
\noindent{\bf Subcase 3.3.} Suppose $n\geq2$ with $v=v_1\cdots v_{n-1}\lc v_n'\rc^{(q)}$ and $v_1, v_n'\notin \lc\rbgw\rc$. Similar to the subcase 3.2,
\begin{eqnarray*}
\A_X(u)\diamond \A_X(v)&=&\Big\lc u_1\cdots u_{m-1} \A_X( u_m')\Big\rc^{(s)}\diamond\Big\lc v_1\cdots v_{n-1} \A_X( v_n')\Big\rc^{(q)}\\
&=&\Big\lc u_1\cdots u_{m-1} \A_X( u_m')\diamond\Big\lc v_1\cdots v_{n-1} \A_X( v_n')\Big\rc\Big\rc^{(s+q-1)}\hspace{0.7cm}(\text{by  ~\meqref{eq:bre1}})
\end{eqnarray*}
is equal to $\A_X(u\diamond \A_X(v))$ .
\delete{
By   ~\meqref{eq:bre1} and~\meqref{eq:daveo} and  the induction hypothesis, we have
{\small\begin{eqnarray*}
&&\A_X(u)\diamond \A_X(v)\\&=&
\left\{
            \begin{array}{ll}
              \lc u\rc; &  \\
\Big\lc u_1'\diamond \A_X(u_2\cdots u_n)\Big\rc^{(t)} ; & \\
\Big\lc u_1\cdots u_{n-1} \A_X( u_n')\Big\rc^{(s)} ; &
            \end{array}
          \right.
\diamond\left\{
            \begin{array}{ll}
              \lc v\rc; &  \\
\Big\lc v_1'\diamond \A_X(v_2\cdots v_m)\Big\rc^{(p)} ; & \\
\Big\lc v_1\cdots v_{m-1} \A_X( v_m')\Big\rc^{(q)} ; &
            \end{array}
          \right.\\
&=&\left\{
     \begin{array}{ll}
      \lc u \diamond \lc v\rc\rc ; & \\
     \Big\lc u \diamond\lc v_1'\diamond \A_X(v_2\cdots v_m)\rc\Big\rc^{(p)}  ;& \\
     \Big\lc u \diamond\lc v_1\cdots v_{m-1} \A_X(v_m')\rc\Big\rc^{(q)};&\\
\Big\lc u_1'\diamond \A_X(u_2\cdots u_n) \diamond \lc v\rc \Big\rc^{(t)}=\Big\lc u_1'\diamond \A_X(u_2\cdots u_n \diamond \lc v\rc)  \Big\rc^{(t)};&\\
\Big\lc u_1'\diamond \A_X(u_2\cdots u_n)\diamond\Big\lc v_1'\diamond \A_X(v_2\cdots v_m)\Big\rc\Big\rc^{(t+p-1)}
=\Big\lc u_1'\diamond \Big\lc \A_X(u_2\cdots u_n)\diamond v_1'\diamond \A_X(v_2\cdots v_m)\Big\rc\Big\rc^{(t+p-1)}  ; & \\
\Big\lc u_1'\diamond \A_X(u_2\cdots u_n)\diamond\Big\lc v_1\cdots v_{m-1} \A_X( v_m')\Big\rc\Big\rc^{(t+q-1)}
=\Big\lc u_1'\diamond \Big\lc \A_X(u_2\cdots u_n) v_1\cdots v_{m-1} \A_X( v_m')\Big\rc\Big\rc^{(t+q-1)}  ; & \\
\Big\lc u_1\cdots u_{n-1} \A_X( u_n') \diamond \lc v\rc \Big\rc^{(s)}=\Big\lc u_1 \cdots u_{n-1}\A_X(u_n' \diamond \lc v\rc)  \Big\rc^{(s)};&\\
\Big\lc u_1\cdots u_{n-1} \A_X( u_n')\diamond\Big\lc v_1'\diamond \A_X(v_2\cdots v_m)\Big\rc\Big\rc^{(s+p-1)}
=\Big\lc u_1\cdots u_{n-1} \Big\lc \A_X( u_n')\diamond v_1'\diamond \A_X(v_2\cdots v_m)\Big\rc\Big\rc^{(s+p-1)}  ; & \\
\Big\lc u_1\cdots u_{n-1} \A_X( u_n')\diamond\Big\lc v_1\cdots v_{m-1} \A_X( v_m')\Big\rc\Big\rc^{(s+q-1)}
=\Big\lc u_1\cdots u_{n-1} \Big\lc \A_X(u_n')\diamond v_1\cdots u_{n-1}\A_X( v_m')\Big\rc\Big\rc^{(s+q-1)}  .& \\
\end{array}
   \right.\\
&=&\A_X(u\diamond \A_X(v)),
\end{eqnarray*}}
and
\begin{eqnarray*}
&&\A_X(\A_X(u)\diamond v)\\&=&
\left\{\begin{array}{ll}
    \A_X\Big(          \lc u\rc \diamond v\Big)=\lc u \diamond \lc v\rc\rc ; &  \\
    \A_X\Big(   \Big\lc u_1'\diamond \A_X(u_2\cdots u_n)\Big\rc^{(t)} \diamond v\Big)= \Big\lc u_1'\diamond \A_X(u_2\cdots u_n)\diamond \lc v\rc\Big\rc^{(t)}=\Big\lc u_1'\diamond \A_X(u_2\cdots u_n\diamond \lc v\rc)\Big\rc^{(t)}; & \\
    \A_X\Big(   \Big\lc u_1\cdots u_{n-1} \A_X( u_n')\Big\rc^{(s)} \diamond v\Big)= \Big\lc u_1\cdots u_{n-1} \A_X( u_n')\diamond \lc v\rc\Big\rc^{(s)}=\Big\lc u_1\cdots u_{n-1} \A_X( u_n'\diamond \lc v\rc)\Big\rc^{(s)}; &
            \end{array}
          \right.\\
&=&\A_X(u\diamond \A_X(v)).
\end{eqnarray*}}
Thus two equations in  ~\meqref{eq:fave} are verified.
This completes the proof.
\end{proof}

Having introduced all auxiliary results, we are ready for our main result in this section.

\begin{thm}\mlabel{thm:freea}
Let $X $ be a set. The triple $(\rbgw, \diamond, \A_X)$, together with the inclusion $j_X : X  \to \rbgw$, is the
free \aveg  on $X$.
\end{thm}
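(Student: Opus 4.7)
The plan is to leverage the universal property of the free operated group $(\calg(X),P_X)$ from Lemma~\ref{thm:freeopgp} together with Theorem~\ref{thm:avegp}, which already establishes that $(\rbgw,\diamond,\A_X)$ is an averaging group. Given an averaging group $(G,\A)$ and a map $\varphi:X\to G$, I would view $(G,\A)$ as an operated group and invoke Lemma~\ref{thm:freeopgp} to obtain the unique operated group morphism $\tilde{\varphi}:(\calg(X),P_X)\to (G,\A)$ extending $\varphi$. I would then set $\free{\varphi}:=\tilde{\varphi}|_{\rbgw}$. What remains is to verify (a) that $\free{\varphi}$ intertwines $\diamond$ with the product of $G$, (b) that $\free{\varphi}$ intertwines $\A_X$ with $\A$, and (c) uniqueness of $\free{\varphi}$ as a morphism of averaging groups extending $\varphi$.

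For (a), since $\tilde{\varphi}$ is already a group homomorphism on $\calg(X)$, the identity $\free{\varphi}(u\diamond v)=\free{\varphi}(u)\free{\varphi}(v)$ reduces to showing $\tilde{\varphi}(u\diamond v)=\tilde{\varphi}(uv)$ in $G$, where $uv$ is the free-group product in $\calg(X)$. I would prove this by induction on $(\dep(u),\dep(v))$ and on $\bre(u)+\bre(v)$, mirroring the case distinctions in~\meqref{eq:bre1} and~\meqref{eq:stdia}. The crucial subcase is Case~1 of~\meqref{eq:bre1} with $u=\lc u'\rc^{(s)}$ and $v=\lc v'\rc^{(t)}$: here one must verify the identity $\A^s(a)\A^t(b)=\A^{s+t-1}(a\,\A(b))$ in $G$ for $a=\tilde{\varphi}(u')$, $b=\tilde{\varphi}(v')$, which is precisely the iterated averaging identity derived just before~\meqref{eq:aa2} from the axiom~\meqref{eq:avegp}. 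The inverse-bracket subcase follows by taking inverses, and all remaining cases reduce immediately to the definition of $\diamond$ by concatenation.

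For (b), proceed again by induction on $(\degp(w),\bre(w))$ following the three cases of~\meqref{eq:daveo}. In the first case, $\A_X(w)=\lc w\rc$ holds as elements of $\calg(X)$ and the identity is immediate from $\tilde{\varphi}\circ P_X=\A\circ\tilde{\varphi}$. The remaining two cases require first invoking (a) to handle the $\diamond$ appearing inside the definition of $\A_X(w)$, and then applying the same averaging identities in $G$ as in (a); the argument closely parallels the inductive case analysis already carried out in Theorem~\ref{thm:avegp}(ii), now transported through $\tilde{\varphi}$ into $G$. For uniqueness (c), suppose $\psi:(\rbgw,\A_X)\to (G,\A)$ is any morphism of averaging groups extending $\varphi$. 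Since every averaging group word is built from $j_X(X)$ by iterated applications of $\diamond$, group inversion, and $\A_X$ (evident from the recursive construction of $\rbgw$), an induction on depth and breadth forces $\psi(w)=\free{\varphi}(w)$ for all $w\in\rbgw$.

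The principal obstacle is the sheer volume of subcases in the definitions of $\diamond$ and $\A_X$: both (a) and (b) must be verified case by case, mirroring the proof of Theorem~\ref{thm:avegp}. Once one recognises that every case reduces to iterated application of the single averaging identity $\A(g)\A(h)=\A(g\A(h))=\A(\A(g)h)$ together with its consequences such as $\A^s(a)\A^t(b)=\A^{s+t-1}(a\A(b))$ in $G$, the verification becomes routine bookkeeping and the universal property follows cleanly from that of the operated group.
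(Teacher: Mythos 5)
Your plan is correct, but it is organized differently from the paper's proof. The paper never invokes the universal property of $(\calg(X),P_X)$ here: it builds $\free{\varphi}$ from scratch by a recursion on depth and breadth (defining it on letters, on bracketed letters via $\A$, and on standard factorizations by concatenation of images), then proves multiplicativity with respect to $\diamond$ by a four-case induction, and gets uniqueness because those recursive formulas are forced. You instead obtain the operated-group morphism $\tilde{\varphi}$ on all of $\calg(X)$ for free from Lemma~\ref{thm:freeopgp}, restrict it to $\rbgw$, and reduce everything to comparing the two structures under $\tilde{\varphi}$: $\diamond$ versus the free-group product (only the two merging subcases of \meqref{eq:bre1} are nontrivial, and they are exactly the identities $\A^s(a)\A^t(b)=\A^{s+t-1}(a\A(b))$ and its inverse-mirror in $G$), and $\A_X$ versus $P_X$ (the second and third cases of \meqref{eq:daveo}, handled by $\A(\A^s(a)b)=\A^s(a\A(b))=\A(a\A^s(b))$). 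This buys a cleaner existence and well-definedness step and makes transparent which averaging identities carry the whole argument, at the price of still needing essentially the same case bookkeeping for your steps (a) and (b) that the paper performs for its map. One point you should not dismiss as ``evident'': your uniqueness step needs that every $w\in\rbgw$ is the $\diamond$-product of the letters in its standard factorization (which uses that the defining restrictions on $\rbgw$ prevent the merging cases of \meqref{eq:bre1} from firing between adjacent letters) and that every bracketed letter satisfies $\lc w'\rc^{(s)}=\A_X\big(\lc w'\rc^{(s-1)}\big)$ with $\lc w'\rc=\A_X(w')$, the latter requiring a short induction on depth using \meqref{eq:daveo}; the paper's uniqueness argument sidesteps this by appealing to the forced recursive definition, so spell this lemma out if you write your version up in full.
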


\begin{proof}
For any  \aveg $(G,\A)$ and any map $\varphi:X\to G$, it suffices to show that there is a unique \aveg homomorphism $$\lbar{\varphi}: \rbgw\to G, \quad w\mapsto \lbar{\varphi}(w)$$ such that $\varphi=\lbar{\varphi} \circ j_{X}$.

({\bf Existence})  We achieve an \aveg morphism $\lbar{\varphi} $ by applying the induction on $\dep(w)\geq 0$.
For the initial step of $\dep(w) = 0$, there are two cases to consider: $\bre(w)\leq 1$ or $\bre(w)\geq 2$.
If $\bre(w)\leq 1$, then $w=1$ or $w\in X\sqcup X^{-1}$. Then define
\begin{align}
\lbar{\varphi}(w):=
\begin{cases}
1_G, & \text{ if } w = 1,\\
\varphi(w),& \text{ if } w \in X,\\
\varphi(x)^{-1}, &\text{ if } w=x^{-1} \in X^{-1} \text{ for some } x\in X.
\end{cases}
\mlabel{eq:bfdep0}
\end{align}
If $\bre(w) \geq 2$, write $w=w_{1}\cdots w_{n}$ in the standard  factorization with $n\geq 2$ and define
\begin{equation}
\lbar{\varphi}(w) :=\lbar{\varphi}(w_{1}\cdots w_{n}) :=\lbar{\varphi}(w_{1})\cdots \lbar{\varphi}(w_{n}),
\mlabel{eq:rbdef3'}
\end{equation}
where each $\lbar{\varphi}(w_i)$ is defined in  ~(\mref{eq:bfdep0}).

For a fixed $k\geq 0$, assume that $\lbar{\varphi}(w)$ have been defined for $w\in \rbgw$ with $\dep(w)\leq k$ and consider the inductive step $w\in \rbgw$ with $\dep(w)=k+1$. There are two cases to consider: $\bre(w)= 1$ or $\bre(w)\geq 2$.

First consider $w$ with $\bre(w)= 1$. From $\dep(w)=k+1\geq 1$, there are two cases $w\in \lc \rbgw\rc$ or $w\in \lc \rbgw\rc^{-1}$. In the first case, write $w=\lc w'\rc=\A_X( {w'})$ for some ${w'}\in \rbgw$ with $\dep({w'})=k$. So the induction hypothesis allows us to define
\begin{equation}
\lbar{\varphi}(w):=\lbar{\varphi}\big(\lc {w'}\rc\big)=\lbar{\varphi}\big(\A_X( {w'})\big):= \A\big(\lbar{\varphi}({w'})\big).
\mlabel{eq:rbdef1}
\end{equation}
In the second case, write $w =\lc w'\rc^{-1}=\A_X( {w'})^{-1}$ for some ${w'}\in \rbgw$ with $\dep(w')=k$. Then the inductive hypothesis allows us to define
\begin{equation}
\lbar{\varphi}(w) :=\lbar{\varphi}\big(\lc {w'}\rc^{-1}\big)=\lbar{\varphi}\big(\A_X( {w'})^{-1}\big):=\A\big(\lbar{\varphi}({w'})\big)^{-1}.
\mlabel{eq:rbdef2}
\end{equation}

Next consider $w$ with $\bre(w)\geq 2$. Write $w=w_{1}\cdots w_{n}$ in the standard factorization in~\meqref{eq:frbg} with $n\geq 2$ and define
\begin{equation}
\lbar{\varphi}(w) :=\lbar{\varphi}(w_{1}\cdots w_{n}) :=\lbar{\varphi}(w_{1})\cdots \lbar{\varphi}(w_{n}),
\mlabel{eq:rbdef3}
\end{equation}
where each $\lbar{\varphi}(w_i)$ is defined in   ~(\mref{eq:bfdep0}), (\mref{eq:rbdef1}) or~(\mref{eq:rbdef2}).

By  ~(\ref{eq:rbdef1}), we have $\lbar{\varphi}  \circ \A_X = \A\circ \lbar{\varphi}$. So we are left to prove that $\lbar{\varphi}$ is a group homomorphism:
\begin{equation}
\lbar{\varphi}(u\diamond v)=\lbar{\varphi}(u)\lbar{\varphi}(v)\tforall~ u,v\in \rbgw,
\mlabel{eq:rbhomo}
\end{equation}
which will be achieved by an induction on $(\dep(u), \dep(v))\geq (0,0)$ lexicographically.

If $(\dep(u), \dep(v))=(0, 0)$, then $u, v\in \calg_{0}$ and  ~\eqref{eq:rbhomo} follows from  ~\eqref{eq:rbdef3'}.
Next fix $p,q\geq 0$ with $(p,q)>(0,0)$ lexicographically.
Assume that~\eqref{eq:rbhomo} holds for $u,v\in \rbgw$ with $(\dep(u), \dep(v))< (p, q)$
lexicographically, and consider $u,v\in \rbgw$ with $(\dep(u), \dep(v)) = (p, q)$.
There are four cases to consider:
$$\left\{
  \begin{array}{ll}
    & \hbox{  $\bre(u)=\bre(v)=1$ and $u=\lc u' \rc^{(s)}, v=\lc v' \rc^{(t)}$, $u, v \notin \lc\rbgw\rc$;} \\
   & \hbox{  $\bre(u)=\bre(v)=1$ and  $u=\big(\lc u' \rc^{(p)}\big)^{-1}, v=\big(\lc v' \rc^{(q)}\big)^{-1}$, $u, v \notin \lc\rbgw\rc$;} \\
& \hbox{ otherwise for $\bre(u)=\bre(v)=1;$} \\
 & \hbox{  $\bre(u)+\bre(v)\geq3$ .} \\
  \end{array}
\right.$$

\noindent{\bf Case 1.} Suppose that  $\bre(u)=\bre(v)=1$, $u=\lc u'\rc^{(s)}$ and $v=\lc v'\rc^{(t)}$ for some $u',v'\in \rbgw$.
By the induction hypothesis,
\begin{eqnarray*}
 \lbar{\varphi}(u\diamond v)
&=& \lbar{\varphi}\big(\lc u' \rc^{(s)} \diamond \lc v'\rc^{(t)}\big)\\
&=&\lbar{\varphi}\big(\lc u'  \diamond \lc v'\rc\rc^{(s+t-1)}\big)\hspace{1.3cm}(\text{by  ~\meqref{eq:bre1}})\\
&=&\lbar{\varphi}\big(\A_X^{s+t-1}( u'  \diamond \A_X( v'))\big)\hspace{1.3cm}(\text{by  ~\meqref{eq:daveo}})\\
&=&\A^{s+t-1}\lbar{\varphi}\big( u'  \diamond \A_X( v')\big)\hspace{1.5cm}(\text{by  ~\meqref{eq:rbdef1}})\\
&=&\A^{s+t-1}\Big(\lbar{\varphi}( u')  \lbar{\varphi}\big( \A_X( v')\big)\Big)\hspace{1cm}(\text{by the induction hypothesis})\\
&=&\A^{s+t-1}\Big(\lbar{\varphi}( u')  \A\big(\lbar{\varphi} ( v')\big)\Big)\\
&=&\A^{s}\Big(\lbar{\varphi}( u')\Big)  \A^t\Big(\lbar{\varphi} ( v')\Big)\\
&=&\lbar{\varphi}\big(\A_X^{s}( u')\big) \lbar{\varphi}\big(\A_X^{t}( v')\big)\\
&=&\lbar{\varphi}\big(\lc u'\rc^{(s)}\big) \lbar{\varphi}\big(\lc v'\rc^{(t)}\big)\\
&=&\lbar{\varphi}\big(u\big) \lbar{\varphi}\big(v\big).
\end{eqnarray*}

\noindent{\bf Case 2.} Suppose that $\bre(u)=\bre(v)=1$, $u= \big(\lc u' \rc^{(p)}\big)^{-1}$ and $v=\big(\lc v' \rc^{(q)}\big)^{-1}$ for some $u',v'\in \rbgw$.
Using the induction hypothesis,
\begin{eqnarray*}
  \lbar{\varphi}(u\diamond v)
 &=& \lbar{\varphi}\Big(\big(\lc u' \rc^{(p)}\big)^{-1} \diamond \big(\lc v' \rc^{(q)}\big)^{-1}\Big) \\
&=& \lbar{\varphi}\Big(\big(\lc {v'}\rc^{(q)}\diamond \lc {u'}\rc^{(p)}\big)^{-1}\Big) \\
&=& \Big(\lbar{\varphi}\big(\lc {v'} \rc^{(q)}\diamond \lc{u'}\rc^{(p)}\big)\Big)^{-1} \hspace{2cm} (\text{by  ~(\ref{eq:rbdef2})})\\
&=& \Big(\lbar{\varphi}(\lc {v'} \rc^{(q)}) \,\lbar{\varphi}(\lc{u'}\rc^{(p)})\Big)^{-1} \hspace{2cm} (\text{by Case 1})\\
&=&  \Big(\lbar{\varphi}\big(\lc {u'} \rc^{(p)}\big)\Big)^{-1} \,\Big(\lbar{\varphi}\big(\lc{v'}\rc^{(q)}\big)\Big)^{-1}\\
&=& \lbar{\varphi}\Big((\lc {u'} \rc^{(p)})^{-1}\Big) \,\lbar{\varphi}\Big((\lc{v'}\rc^{(q)})^{-1}\Big) \hspace{2cm} (\text{by  ~(\ref{eq:rbdef2})}) \\
&=& \lbar{\varphi}(u) \lbar{\varphi}(v).
\end{eqnarray*}

\noindent{\bf Case 3.} Suppose $\bre(u) = \bre(v)=1$, but $u, v$ are not in the previous two cases. Then by definition, $u\diamond v= uv$ is the concatenation. It follows from  ~(\mref{eq:rbdef3}) that
\begin{align*}
\lbar{\varphi}(u\diamond v)=&\ \lbar{\varphi}(uv)= \lbar{\varphi}(u)\lbar{\varphi}(v).
\end{align*}

\noindent{\bf Case 4.} Suppose $\bre(u)+\bre(v)\geq3$. We have at least one of $\bre(u)$ or $\bre(v)$ is greater than $1$. Let $u=u_{1}\cdots u_{m}$ and $v=v_{1}\cdots v_{n}$ be in the standard factorizations in  ~\meqref{eq:frbg}.
Then
\begin{eqnarray*}
&& \lbar{\varphi}(u\diamond v)\\
&=& \lbar{\varphi}\big(u_{1}\cdots u_{m-1}(u_{m}\diamond v_{1})v_{2}\cdots v_{n}\big) \\
&=&\lbar{\varphi}(u_{1})\cdots \lbar{\varphi}(u_{m-1})\, \lbar{\varphi}(u_{m}\diamond v_{1}) \,\lbar{\varphi}(v_{2}) \cdots \lbar{\varphi}(v_{n}) \hspace{2cm} (\text{by~(\ref{eq:rbdef3})})\\
&=& \lbar{\varphi}(u_{1})\cdots \lbar{\varphi}(u_{m})\, \lbar{\varphi}(v_{1})\cdots \lbar{\varphi}(v_{n}) \hspace{4cm} (\text{by Cases 1, 2, 3})\\
&=& \lbar{\varphi}(u_{1}\cdots  u_{m}) \, \lbar{\varphi}(v_{1}\cdots v_{n}) \hspace{5cm}(\text{by~(\ref{eq:rbdef3})})\\
&=& \lbar{\varphi}(u)\lbar{\varphi}(v).
\end{eqnarray*}

Therefore the map $\lbar{\varphi}$ is an \aveg morphism.

({\bf Uniqueness}) Notice that   ~(\ref{eq:bfdep0})--(\ref{eq:rbdef3}) give the only possible way to define $\lbar{\varphi}(w)$
in order for $\lbar{\varphi}$ to be an \aveg homomorphism that extends $\varphi$. This proves the uniqueness.
\end{proof}

\noindent
{\bf Acknowledgments.}
The second author is supported by
the Natural Science Foundation of Gansu Province (25JRRA644), Innovative Fundamental Research Group Project of Gansu Province (23JRRA684), and Longyuan Young Talents of Gansu Province.
The first author is supported by the Scientific Research Foundation of High-Level Talents of Yulin University and Young Talent Fund of Association for Science and Technology in Shaanxi, China (20250530).

\noindent
{\bf Competing Interests.} On behalf of all authors, the corresponding author states that there is no conflict of interest.

\noindent
{\bf Data Availability.} The manuscript has no associated data.

\end{document}